\documentclass[a4paper, 12pt,oneside,reqno]{amsart}
\usepackage[a4paper]{geometry}
\usepackage{amssymb,amsmath,amsthm}
\usepackage[T1]{fontenc}
\usepackage{lmodern,xcolor}
\usepackage[breaklinks=true]{hyperref}
\hypersetup{hidelinks}

\usepackage{graphicx}

\DeclareSymbolFont{symbolsC}{U}{pxsyc}{m}{n}
\SetSymbolFont{symbolsC}{bold}{U}{pxsyc}{bx}{n}
\DeclareFontSubstitution{U}{pxsyc}{m}{n}
\DeclareMathSymbol{\medcirc}{\mathbin}{symbolsC}{7}
\DeclareMathSymbol{\medbullet}{\mathbin}{symbolsC}{8}

\def\Var{\mathrm{Var}}

\def\Der{\mathrm{Der}}
\def\Nov{\mathrm{Nov}}
\def\Acaa{\mathrm{Acaa}}
\def\ComMag{\mathrm{ComMag}}
\def\Com{\mathrm{Com}}
\def\As{\mathrm{As}}
\def\Lie{\mathrm{Lie}}
\def\wt{\mathrm{wt}}
\def\pt{\mathrm{pt}}
\def\Res {\mathop {\fam 0 Res}}

\def\End {\mathop {\fam 0 End}\nolimits}
\def\Hom {\mathop {\fam 0 Hom}\nolimits}

\def\vecotimes{\mathbin{\vec\otimes}}

\let\<\langle
\let\>\rangle

\newtheorem{definition}{Definition}
\newtheorem{lemma}{Lemma}
\newtheorem{proposition}{Proposition}
\newtheorem{theorem}{Theorem}
\newtheorem{corollary}{Corollary}
\newtheorem{remark}{Remark}
\newtheorem{example}{Example}

\title[The Weight Criterion, Dong Property, and Manin Products of Operads]{The Weight Criterion, Dong Property, and Manin Products of Operads}

\author{P. Kolesnikov}

\address{Sobolev Institute of Mathematics, Novosibirsk, Russia}

\email{pavelsk77@gmail.com}

\author{B. Sartayev$^{*}$}

\address{Narxoz University, Almaty, Kazakhstan}

\address{SDU University}

\email{baurjai@gmail.com}

\keywords{Derivation, operad, free algebra, Manin product}

\subjclass[2020]{17A30, 17A50, 16R10}

\thanks{${}^{*}$Corresponding author: Bauyrzhan Sartayev   (baurjai@gmail.com)}

\begin{document}

\begin{abstract}
We establish that, for a binary operad $\Var$, the Hadamard product of $\Var$ and the operad $\Nov$ of Novikov algebras is a binary operad (i.e., coincides with their Manin white product $ \Nov \medcirc \Var $) if and only if the quadratic part of $\Var$ governs a 2-variety. The condition $\Nov\medcirc \Var =  \Nov \otimes \Var $ is known to be equivalent to the existence of an easily verifiable characterization of those differential $\Var$-polynomials which lie in the free derived $\Var$-algebra. For a binary quadratic operad, these conditions are also equivalent to the Dong property of formal distributions. We distinguish the white product defined as a generated sub-operad from its quadratic approximation and give an example in which the canonical epimorphism between them is not injective.
\end{abstract}

\maketitle

\section{Introduction}

A Novikov algebra is a linear space equipped with a bilinear operation $\circ$ satisfying the following identities:
\begin{gather}
(x\circ y)\circ z - x\circ (y\circ z) - (y\circ x)\circ z + y\circ (x\circ z) =0, \label{eq:LSym} \\
(x\circ y)\circ z - (x\circ z)\circ y = 0. \label{eq:RCom}
\end{gather}
Such algebras first appeared in \cite{GD1979} in the following context: the coefficients of a certain linear operator $H$ emerging in formal variational calculus are structure constants of a Novikov algebra if and only if the operator $H$ is Hamiltonian.
Later in \cite{BalNov1985} the same variety of algebras emerged in a different problem related to the theory of partial differential systems of hydrodynamic type. Namely, the coefficients of a formal bracket on the space of field functions form the structure tensor of a Novikov algebra if and only if the bracket satisfies the Jacobi identity (i.e., this is a generalized Poisson bracket, the main component of a Hamiltonian formalism in partial differential equations).
In \cite{Osborn92}, algebras satisfying these identities were called Novikov algebras.

The structure and representation theory of Novikov algebras have been widely studied (see \cite{ZP2024} and references therein). The combinatorial study of Novikov algebras was initiated in \cite{DzhLofwall2002}, where it was shown, in particular, that the free Novikov algebra generated by a set $X$ is a subalgebra of the free commutative associative algebra $\Com\Der\<X,d\>$ generated by $X$ with one derivation $d$, relative to the operation
\[
f\circ g = f d(g),\quad f,g\in \Com\Der\<X,d\>.
\]

It was already noted in \cite{GD1979} that every associative and commutative algebra $A$ ($A\in \Com $) equipped with a derivation $d$ turns into a Novikov algebra (denoted $A^{(d)}$) with the new operation  $a\circ b = ad(b)$, $a,b\in A$. Later in \cite{BCZ2017} it was proved that every Novikov algebra can be embedded into an algebra of the form $A^{(d)}$ for appropriate $A\in \Com $ and~$d\in \Der(A)$.

The result of \cite{DzhLofwall2002} thus may be interpreted as follows. Consider the T-ideal $I_{D\Com }$ of all identities that hold on all algebras of the form $A^{(d)}$, where $A$ is an associative and commutative algebra, $d$ is a derivation of $A$, $d\in \Der(A)$. Then $I_{D\Com}$ is generated as a T-ideal by the identities \eqref{eq:LSym} and  \eqref{eq:RCom}.

In \cite{KolSartOrazg2019} the following question was considered: what are the analogues of \eqref{eq:LSym} and \eqref{eq:RCom} if we replace  $\Com$ with another variety $\Var $ of algebras with binary operations?

Namely, suppose $A$ is a (non-associative) algebra with one or more bilinear operations $(x,y)\mapsto xy$ and $d$ is a derivation of~$A$. Then the {\em derived} algebra $A^{(d)}$  is the same space equipped with new operations
\[
x\prec y =x d(y),\quad x\succ y = d(x)y.
\]
Given a variety $\Var $ of algebras, the derived variety $D\Var$ is generated (as an HSP-class) by all derived algebras $A^{(d)}$ for all $\Var$-algebras $A$ with all possible derivations $d\in \Der(A)$. The problem is to find the identities generating the T-ideal $I_{D\Var}$.

If $\Var $ is defined by multilinear identities, then it is natural to state the question in terms of operads.

It was shown in \cite{KolSartOrazg2019} that the class $D\Var$ is governed by the operad $\Nov\medcirc \Var$, where $\medcirc$ is the Manin white product of operads in the sense of Ginzburg and Kapranov \cite[2.2.3]{GK1994}. To be more precise, $\Nov\medcirc \Var$ is the sub-operad of the Hadamard product $\Nov\otimes \Var$ generated by $\Nov(2)\otimes\Var(2)$.

The main purpose of this paper is to determine when the weight criterion holds for an arbitrary variety with finitely many binary operations. The criterion says that a differential $\Var$-polynomial $f\in \Var\Der\langle X,d\rangle \cong \Var\langle X\cup X'\cup\dots \rangle $ belongs to the free derived algebra
$D\Var\langle X\rangle $
if and only if it is weight-homogeneous of weight $-1$, where
the weight is additive on products and satisfies $\wt(x^{(s)})=s-1$ for $x\in X$; see \cite{KolSartSIGMA2024}.
It is known that the weight criterion is applicable
if and only if the Hadamard product $\Var\otimes\Nov$
coincides with its sub-operad generated by $\Var(2)\otimes\Nov(2)$; see Proposition~\ref{prop:WeightHadamard}. Several particular cases of this equality were considered in \cite{KolSartSIGMA2024,KolSartOrazg2019,SK_Europ}.

We show that the equality $\Nov\medcirc \Var = \Nov \otimes \Var$ is determined by the identities of degree~3 of $\Var$. More precisely, every product $(x_1\circ_j x_2)\circ_i x_3$ must be expressible as a linear combination of products in which $x_3$ occurs in the inner multiplication (Theorem~\ref{thm:WeightCriterion}). This condition is equivalent to the following ideal-theoretic property: for every algebra $A$ of the variety and every ideal $J$ of $A$, the space $J^2$ is again an ideal of $A$. A variety with this property is called a 2-variety. Thus the weight criterion holds precisely when the quadratic part of $\Var$ governs a 2-variety (Proposition~\ref{prop:TwoVariety}). In particular, relations of higher degree do not affect the answer.

For a binary quadratic operad $\Var $, the same condition has a dual
interpretation. It means that the compositions of two binary
generators with a fixed pair of inner arguments are linearly
independent in the Koszul dual operad. By the criterion of
\cite{KolSart_DongOperads}, this is exactly the case when a natural analogue of the Dong Lemma from the theory of vertex and conformal algebras \cite{Kac1} holds for distributions over $\Var$-algebras (i.e., $\Var $ satisfies the {\em Dong property}).
We obtain the equivalence between the Dong property and the weight criterion in Theorem~\ref{thm:DongWeight}.

The terminology of Manin white products requires some care. The binary sub-operad of the Hadamard product need not be quadratic even when both factors are quadratic. We distinguish this construction from the operad defined by its quadratic relations in Section~\ref{sec:Preliminaries}. An explicit example in which these operads differ already in arity~4 is given below. Throughout the paper, the product governing a derived variety is the sub-operad of $\Var\otimes\Nov$ generated by $\Var(2)\otimes\Nov(2)$.

The Dong property need not pass from $\Var$ to $D\Var$. In Section~\ref{sec:Dong} we characterize the binary operads $\Var$ for which $D\Var$ is a Dong operad (Theorem~\ref{thm:DerivedDong}): this happens if and only if the compositions of two binary generators with a fixed outer argument span $\Var(3)$, equivalently, if and only if $\Var\medcirc\mathcal Q=\Var\otimes\mathcal Q$ for every binary operad $\mathcal Q$. In particular, $D^2\Var$ is a Dong operad only when $\Var(3)=0$ (Corollary~\ref{cor:DerivedIterations}); for $\Var=\Com$ the chain $\Com$, $\Nov=D\Com$, $D\Nov$ shows that the property may survive exactly one derivation. Finally, we show that the derived operad $D\Acaa$ is not quadratic, although the weight criterion holds for $\Acaa$ (Proposition~\ref{thm:DerivedAcaaNonquadratic}).

\section{Preliminaries in operads and Manin products}\label{sec:Preliminaries}

Throughout the paper, all spaces are over a field $\Bbbk$ of
characteristic~0. All algebras are non-unital, and all spaces of
binary generators are assumed to be finite-dimensional.

Recall that an {\em operad}  $\mathcal P$
is a family of linear spaces $\mathcal P(n)$, $n\ge 1$, where each $\mathcal P(n)$ is a module over the symmetric group $S_n$, equipped with a {\em composition rule}
\[
\mathcal P(n)\otimes \mathcal P(m_1)\otimes \dots\otimes \mathcal P(m_n)
\to \mathcal P(m_1+\dots +m_n),
\quad n,m_i\ge 1,
\]
satisfying natural axioms including associativity, unitality, and equivariance (see {\em symmetric operad} in \cite{LodayVallette2012}).
A {\em morphism} of operads $f: \mathcal P\to \mathcal Q$ is a family of $S_n$-invariant linear maps $f(n):\mathcal P(n) \to \mathcal Q(n)$,
$n\ge 1$,
preserving the composition rule.

For example, given a linear space $A$, the operad $\End_A$ consists of linear spaces
$\End_A(n) = \Hom (A^{\otimes n}, A)$, $n\ge 1$,
equipped with the ordinary composition rule. The symmetric group $S_n$ acts on $\End_A(n)$ by permutation of arguments.

Suppose $\Var $ is a variety of algebras with bilinear operations defined by multilinear identities. Then there exists an operad also denoted $\Var$ such that a structure of a $\Var$-algebra on a linear space $A$ is the same as a morphism of operads $\Var \to \End_A$. Namely, the space
$\Var(n)$ consists of all multilinear polynomials in the free $\Var$-algebra in the independent variables $x_1,\dots, x_n$, the symmetric group $S_n$ acts by permutations of variables, and the composition is naturally defined via the substitution (see, e.g., \cite{LodayVallette2012}).

An operad $\mathcal P$ is said to be {\em binary} if
$\dim \mathcal P(1)=1$ and the entire $\mathcal P$ is generated (as an operad) by the space $\mathcal P(2)$.

Given an $S_2$-space $V$, $\dim V<\infty$, one may define the free (binary) operad $\mathcal F_V$ generated by $V$. In particular,
$\mathcal F_V(2)=V$, $\mathcal F_V(3) = \mathrm{Ind}_{\Bbbk S_2}^{\Bbbk S_3} (V\vecotimes V)$,
where $V\vecotimes V$ is the tensor square of $V$ such that $S_2$ acts on the second factor only.
Suppose $R$ is an $S_3$-subspace of $\mathcal F_V(3)$. Then the quotient operad of $\mathcal F_V$ modulo the operad ideal generated by $R$ is denoted $\mathcal P(V,R)$; such operads are called {\em quadratic} since their defining relations are of degree two with respect to the generators~$V$.

With representatives $1,(13),(23)$ of the
cosets of $S_2$ in $S_3$, we have a direct sum of vector spaces
\begin{equation}\label{eq:ThreeSummands}
\mathcal F_V(3)=U\oplus(13)U\oplus(23)U,
\qquad U=V\vecotimes V.
\end{equation}

For an arbitrary binary operad $\mathcal P$, consider $V=\mathcal P(2)$ and put
\[
R_{\mathcal P}=
\ker\bigl(\mathcal F_{V}(3)\to\mathcal P(3)\bigr).
\]
The \emph{quadratic part} of $\mathcal P$ is
the operad
$\mathcal P_q=\mathcal P(V,R_{\mathcal P})$.
There is a canonical epimorphism
$\mathcal P_q\to\mathcal P$,
which is an isomorphism in arities $n=1,2,3$.

We also recall the convention for Koszul duality. Set
$V^\vee=V^*\otimes\operatorname{sgn}_2$.
The spaces $\mathcal F_{V^\vee}(3)$ and $\mathcal F_V(3)$ have
the standard nondegenerate sign-equivariant pairing: different
summands in \eqref{eq:ThreeSummands} are orthogonal, and for dual
bases $(\mu_i^\vee)$ and $(\mu_i)$ it is given by
\[
\bigl\langle
\sigma(\mu_i^\vee\vecotimes\mu_j^\vee),
\tau(\mu_k\vecotimes\mu_l)
\bigr\rangle
=\delta_{\sigma,\tau}\operatorname{sgn}(\sigma)
\delta_{ik}\delta_{jl},
\qquad \sigma,\tau\in\{1,(13),(23)\}.
\]
For $\mathcal P=\mathcal P(V,R)$, its Koszul dual operad is
$\mathcal P^!=\mathcal P(V^\vee,R^\perp)$; see
\cite{GK1994,LodayVallette2012}.

We identify the elements of $\mathcal F_V(n)$ with multilinear polynomials in formal variables $x_1,\dots, x_n$ with binary operations $v\in V$.
In this way, we identify $u\vecotimes v$ with the composition $u(v(x_1,x_2),x_3)$, $(12)(u\vecotimes v) = u\vecotimes (12)v$ with $u(v(x_2,x_1),x_3)$, etc.

\begin{example}\label{exmp:Acaa}
Let $V$ be the skew-symmetric 1-dimensional $S_2$-module spanned by $\mu =-(12)\mu$.
If $R \subset \mathcal F_V(3)$ is generated as an $S_3$-module by
\[
\mu\vecotimes \mu + (13)(\mu\vecotimes \mu)
\]
then $\mathcal P(V,R)$ is the operad of $\Acaa$-algebras (anti-commutative and anti-associative): the relation reads $(x_1x_2)x_3+x_1(x_2x_3)=0$.
\end{example}

\begin{example}\label{exmp:Nov}
Let $V$ be a 2-dimensional $S_2$-module spanned by $\nu_1$ and $\nu_2=(12)\nu_1$. Consider
$R\subset \mathcal F_V(3)$ generated by
\[
(\nu_1\vecotimes \nu_1) -(\nu_1\vecotimes \nu_2)
-(13)(\nu_2\vecotimes \nu_2)+(23)(\nu_2\vecotimes \nu_1),
\]
\[
(\nu_1\vecotimes \nu_1) - (23)(\nu_1\vecotimes \nu_1).
\]
These relations represent left symmetry and right commutativity of the operation $\nu_1=x_1x_2$,
so $\mathcal P(V,R)$ is exactly the operad $\Nov $
governing the variety of Novikov algebras.
\end{example}

Given two operads $\mathcal P$ and $\mathcal Q$,
their {\em Hadamard product} $\mathcal P\otimes \mathcal Q$ is defined by the family of spaces
$\mathcal P(n)\otimes \mathcal Q(n)$, $n\ge 1$,
with component-wise composition and symmetric action.
However, even if both $\mathcal P$ and $\mathcal Q$
are binary operads, $\mathcal P\otimes \mathcal Q$ need not be binary. The notion of the {\em Manin white product} $\mathcal P\medcirc \mathcal Q$ resolves this issue: this is the sub-operad of
$\mathcal P\otimes \mathcal Q$ generated by
the space $\mathcal P(2)\otimes \mathcal Q(2)$ \cite[2.2.3]{GK1994}.

The Manin white product defined in this way need not be quadratic: Proposition~2.2.4 of \cite{GK1994} does not hold in this generality; see Proposition~\ref{thm:DerivedAcaaNonquadratic} below.
Therefore, for binary quadratic operads $\mathcal P$ and $\mathcal Q$, we distinguish the following two constructions:
\begin{enumerate}
    \item $\mathcal P\medcirc_{GK} \mathcal Q$, the binary sub-operad of $\mathcal P\otimes \mathcal Q$;

    \item $\mathcal P\medcirc_R \mathcal Q$, the quadratic part of
    $\mathcal P\medcirc_{GK} \mathcal Q$,
    i.e., the operad generated by
    $\mathcal P(2)\otimes \mathcal Q(2)$
    relative to only quadratic relations that hold on $\mathcal P\otimes \mathcal Q$.
\end{enumerate}
To describe the second construction precisely, let
$E=\mathcal P(2)$, $F=\mathcal Q(2)$, and let
\[
\Delta :\mathcal F_{E\otimes F}
\longrightarrow\mathcal F_E \otimes\mathcal F_F
\]
be the morphism of operads which sends the generators from $E\otimes F$ identically to themselves, i.e., $\Delta(2)=\mathrm{id}_{E\otimes F}$.
The composition of $\Delta $ with the epimorphisms
$\mathcal F_E\to \mathcal P$ and $\mathcal F_F\to \mathcal Q$ is a morphism $\Theta : \mathcal F_{E\otimes F}\to \mathcal P\otimes \mathcal Q$.
Put
\begin{equation}\label{eq:WhiteRelations}
R_\circ
=\ker\Theta(3)
=\Delta(3)^{-1}\bigl(
R_{\mathcal P}\otimes\mathcal F_F(3)
+\mathcal F_E(3)\otimes R_{\mathcal Q}\bigr).
\end{equation}
Then $\mathcal P\medcirc_R\mathcal Q=\mathcal P(E\otimes F,R_\circ)$.
This is the quadratic Manin construction; see
\cite{Vallette2008}.

\begin{example}\label{exmp:White_R_Nov_Acaa}
In order to compute $\Nov\medcirc_R \Acaa$, one may consider the two-dimensional space of generating operations with a basis $\mu_1 = \nu_1 \otimes \mu$,
$\mu_2 = -\nu_2\otimes \mu = (12)\mu_1$, and find all linear combinations of
$(\mu_i\vecotimes \mu_j)$,
$(13)(\mu_i\vecotimes \mu_j)$,
$(23)(\mu_i\vecotimes \mu_j)$,
$i,j=1,2$,
that vanish in $\Acaa(3)\otimes \Nov(3)$.
It is straightforward to see that $R_\circ$ is generated as an $S_3$-module by
\[
\begin{gathered}
\mu_1\vecotimes\mu_1 + (23)(\mu_1\vecotimes \mu_1), \\
\mu_1\vecotimes\mu_1 + (13)(\mu_2\vecotimes \mu_2)
+ \mu_1\vecotimes\mu_2 + (23)(\mu_2\vecotimes \mu_1).
\end{gathered}
\]
\end{example}

Our notation $\medcirc_{GK}$ refers specifically
to the generated sub-operad in \cite[2.2.3]{GK1994}.
Proposition~2.2.4 of \cite{GK1994} would identify the two constructions $\medcirc_R$ and $\medcirc_{GK}$, but the canonical epimorphism
\begin{equation}\label{eq:WhiteEpi}
\mathcal P\medcirc_{R} \mathcal Q
\to \mathcal P\medcirc_{GK} \mathcal Q
\end{equation}
is not necessarily injective (see Proposition~\ref{thm:DerivedAcaaNonquadratic} below).
Both operads $\mathcal P\medcirc_{R} \mathcal Q$ and $\mathcal P\medcirc_{GK} \mathcal Q$ have generators $E\otimes F$ and the same defining relations in arity~3,
but the second one may have independent relations in arity $n>3$.

We will be interested in the Manin white product of $\Nov$ and $\Var$ for various operads $\Var$ corresponding to varieties of algebras. In many cases, the equality $\Nov\medcirc_{R}\Var=\Nov\medcirc_{GK}\Var$ can be established by other methods. The quadratic white product is considered only when $\Var$ is quadratic. From now on, an unsubscripted $\medcirc$ means $\medcirc_{GK}$.

\section{Derived algebras and derived operads}\label{sec:Derived}

Let $A$ be an algebra, i.e., a linear space over a field $\Bbbk $ equipped with a family of bilinear products $\circ_i$, $i\in I$.

A variety of algebras is the class of all algebras (with a fixed language set $I$) satisfying a family of polynomial identities. The set of all such identities is a T-ideal in the corresponding free algebra. We will consider only multilinear varieties, for which the T-ideal is generated  by multilinear identities (this is not a restriction if $\mathrm{char}\,\Bbbk =0$).

Recall that a linear map $d: A\to A$ is a derivation of the algebra if the Leibniz rule holds for all products, i.e.,
\[
d(a\circ_i b) = d(a)\circ_i b + a\circ_i d(b),
\]
for all $a,b\in A$, $i\in I$.
The set of all derivations of an algebra $A$ is denoted $\Der(A)$.

Given a pair $(A,d)$, where $A$ is an algebra and $d\in \Der(A)$, we define the {\em derived algebra} $A^{(d)}$ on the same linear space $A$, with two operations for each $i\in I$:
\[
a\prec_i b = a\circ_i d(b),\quad  a\succ_i b = d(a)\circ_i b,\quad i\in I,
\]
for $a,b\in A$.

Suppose $\Var $ is a variety of algebras in the language $I$. Consider the variety $\Var\Der$ of all $\Var$-algebras equipped with a derivation, regarded as an additional unary operation. The elements of $\Var\Der$ are pairs $(A,d)$, where $A\in\Var$ and $d\in \Der(A)$. The {\em derived variety} $D\Var $ is the class of algebras satisfying all the identities that hold on every $A^{(d)}$ for all $(A,d)\in \Var\Der$. In other words, $D\Var $ is the variety generated by all derived algebras $A^{(d)}$, $(A,d)\in \Var\Der$.

\begin{example}[\cite{DzhLofwall2002}]\label{exmp:DCom=Nov}
For $\Var =\Com$, the variety of commutative associative algebras, the derived variety $D\Com $ coincides with the class of Novikov algebras.
\end{example}

\begin{example}[\cite{KolSartOrazg2019}]
For $\Var=\Lie$, the variety of Lie algebras, the derived variety $D\Lie $ coincides with the class of all nonassociative algebras with one operation.
\end{example}

In both examples above it is enough to consider only one operation since if the initial product is (anti-)commutative, then $a\prec b =\pm b\succ a$ in the derived algebra.

\begin{remark}\label{rem:Speciality}
Although the free algebra $D\Var\<X\>$ is {\em special}, i.e., isomorphic to a subalgebra of a derived algebra from $\Var\Der $, the same statement need not hold for an arbitrary algebra from $D\Var $. For example, there exist non-special algebras from $D\text{pre-}\Com$ (derived variety of Zinbiel algebras) \cite{KolSartSIGMA2024}.

On the other hand, all algebras from $D\Com$, $D\As$, $D\Lie$ are special in the above-mentioned sense (see, e.g., \cite{SK_Europ}). For the commutative and Lie cases, see also \cite{BCZ2017,KolSartOrazg2019}, respectively.
\end{remark}

Recall that the free algebra in the variety $\Var\Der$ generated by a set $X$ is isomorphic (as a $\Var$-algebra) to the free algebra $\Var\<X^{(\omega)}\>$, where $X^{(\omega)}=X\cup X'\cup\dots\cup X^{(n)}\cup\dots$. The derivation $d$ on $\Var\<X^{(\omega)}\>$ is determined by the rule $d:x^{(n)}\mapsto x^{(n+1)}$, $x\in X$, $n\ge0$. The free algebra $D\Var\<X\>$ is naturally identified with the subalgebra of the derived algebra $\Var\<X^{(\omega)}\>^{(d)}$ generated by $X\subset X^{(\omega)}$. This follows from the universal property of the free differential $\Var$-algebra.

In order to recognize which elements from $\Com\<X^{(\omega )}\>$ belong to $D\Com\<X\>=\Nov\<X\>$, the following criterion was proposed in \cite{DzhLofwall2002}. For a generator $x^{(n)}$ in the free commutative algebra $\Com\<X^{(\omega )}\>$, define its {\em weight} as
\[
\wt(x^{(n)}) = n-1,\quad x\in X,\ n\in \mathbb Z_+.
\]
Then extend the weight function to monomials by the additive rule, i.e.,
\[
\wt(a_1\dots a_k) = \wt(a_1)+\dots + \wt(a_k),
\quad a_i\in X^{(\omega )}.
\]
A polynomial $f\in \Com\<X^{(\omega )}\>$
is said to be weight-homogeneous if all monomials
that appear in $f$ have equal weights.

\begin{proposition}[\cite{DzhLofwall2002}]\label{prop:WeightNov}
A polynomial $f\in \Com\<X^{(\omega )}\>$
belongs to $\Nov\<X\>$ if and only if
$f$ is weight-homogeneous of weight $-1$.
\end{proposition}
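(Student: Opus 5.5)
Both inclusions will be handled separately, working throughout in $\Com\<X^{(\omega)}\>$ with the Novikov operation $f\circ g=f\,d(g)$. By Example~\ref{exmp:DCom=Nov} and the paragraph preceding the statement, $\Nov\<X\>$ is the subalgebra of $\Com\<X^{(\omega)}\>^{(d)}$ generated by $X$. We may assume $f$ is multihomogeneous in the letters of $X$, since both the subalgebra and the weight condition respect the multigrading by the multiset of letters $x\in X$ (ignoring the number of derivatives).

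\emph{Necessity.} We argue by induction on the length of a Novikov monomial. Every $x\in X$ has weight $-1$. The derivation $d$ raises the weight of each monomial by exactly $1$, since it replaces one $x^{(n)}$ by $x^{(n+1)}$. Weight is additive on products. Hence if $f,g$ are weight-homogeneous of weight $-1$, then $f\,d(g)$ is homogeneous of weight $-1+0=-1$. So every element of $\Nov\<X\>$ satisfies the condition.

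\emph{Sufficiency.} Fix a multilinear-type component. By polarization/linearization in characteristic $0$ it suffices to treat multilinear $f$ in distinct letters $x_1,\dots,x_n$. In that case, a monomial $x_1^{(s_1)}\cdots x_n^{(s_n)}$ has weight $\sum s_i-n$, so weight $-1$ means $\sum s_i=n-1$. The space $W_n$ of such monomials has dimension $\binom{2n-2}{n-1}$, the number of compositions of $n-1$ into $n$ nonnegative parts. The multilinear part of $\Nov\<x_1,\dots,x_n\>$ is the image of $\Nov(n)$ under the map $\Nov(n)\to W_n$. This map is injective because $\Nov\<X\>$ embeds, by Example~\ref{exmp:DCom=Nov}, and $\dim\Nov(n)=\binom{2n-2}{n-1}$ is the known dimension of the Novikov operad. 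Hence the image is all of $W_n$, and sufficiency follows by dimension count.

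The main obstacle is that we must not use the embedding result circularly, since \cite{DzhLofwall2002} may derive it from this very criterion. I would therefore instead prove surjectivity onto $W_n$ directly, by induction on $n$. Take a monomial $x_1^{(s_1)}\cdots x_n^{(s_n)}$ with $\sum s_i=n-1$. Some $s_j=0$; isolate it and write the monomial as $x_j\cdot m$, where $m$ has weight $0$ in the remaining $n-1$ letters.

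It then suffices to show that every weight-$0$ multilinear element lies in $d(\text{weight }{-1})+\mathrm{span}\{\text{Novikov products}\cdot d(\cdot)\}$, or more simply that the weight-$0$ space equals $\mathrm{span}\{u\,d(v)\}$ summed over partitions of the letters. Then $x_j\,u\,d(v)=(x_j u)\,d(v)$, where $x_ju$ has weight $-1$ and lies in $\Nov$ by induction, giving a product $(x_ju)\circ v$.

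To get this, write $m=\prod_{i\ne j} x_i^{(s_i)}$ with $\sum s_i=n-1$ over $n-1$ letters. Pick a letter $x_k$ with $s_k\ge1$. The monomial $x_k^{(s_k)}\cdot(\text{rest})$, with the rest of weight $-s_k$, can be expressed as $d$ applied to $x_k^{(s_k-1)}\cdot\text{rest}$ minus terms moving a derivative elsewhere. A downward induction on $s_k$ (or on the lexicographic order of exponent vectors) then expresses each such monomial through the products $u\,d(v)$. This triangular bookkeeping is where the care goes; I would verify it first for $n=3$.
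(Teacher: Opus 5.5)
The paper gives no proof of this statement; it is quoted from \cite{DzhLofwall2002}. Your necessity argument is correct. You are also right to abandon the dimension count. In this paper the basis of $\Nov(n)$, and hence $\dim\Nov(n)=\binom{2n-2}{n-1}$, is deduced from Proposition~\ref{prop:WeightNov} itself (first line of the proof of Proposition~\ref{prop:WeightHadamard}), so that route would be circular here.

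Your direct route is sound, and the step you postpone needs no triangular bookkeeping. Fix $k$ with $s_k\ge1$ and induct on $s_k$ alone.
If $s_k=1$, then $m=r\,d(x_k)$, where $r$ has weight $0$ on the other letters ($r=1$ if there are none).
If $s_k\ge2$, write $m=x_k^{(s_k)}r$. Here $r$ has weight $1-s_k$, not $-s_k$, so $x_k^{(s_k-1)}r$ has weight $-1$, and
\[
m=d\bigl(x_k^{(s_k-1)}r\bigr)-x_k^{(s_k-1)}d(r).
\]
Every monomial of the last term again has weight $0$ and $x_k$-exponent $s_k-1\ge1$. The exponents of the other letters play no role, so the induction closes.

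Combine this with $x_j\,u\,d(v)=(x_ju)\circ v$. Since $T\ne\emptyset$, both factors involve fewer than $n$ letters, and the induction on $n$ proves sufficiency. Passing from the multilinear case to arbitrary $f$ needs only identification of variables, monomial by monomial.

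For comparison, the paper's Lemma~\ref{lem:Rewriting}, specialized to $\Var=\Com$, gives another proof of sufficiency. There Lemma~\ref{lem:Wedding} is trivial, and the argument does not use Proposition~\ref{prop:WeightNov}. It replaces a factor $x\,y^{(s)}$ by $(x\prec y)^{(s-1)}$ minus terms with the derivatives redistributed, adjoins $x\prec y$ as a new letter, and inducts on the potential.

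Your argument instead inducts on the number of letters and always splits off a naked letter as the left factor. It is shorter and avoids the change of alphabet. However, it uses associativity and commutativity essentially, in the regrouping $x_j\cdot u\,d(v)=(x_ju)\,d(v)$. The potential argument is the one that extends to every variety satisfying \eqref{eq:ToInside}.
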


The same definition of weight applies in $\Var\<X^{(\omega )}\>$ for an arbitrary multilinear variety $\Var$.
If $f\in D\Var\<X\>$, then $f$ is a weight-homogeneous element of $\Var\<X^{(\omega )}\>$ and $\wt(f)=-1$.
As shown in \cite{KolSartSIGMA2024}, if the converse is true
(i.e., if the {\em weight criterion} works for $\Var $)
then every $D\Var $-algebra is special.
(This is just a sufficient condition for every
$D\Var $-algebra to be embeddable into a differential
$\Var $-algebra.)

For $n\ge1$, denote by $W_\Var(n)$ the space of differential
polynomials which are multilinear in $x_1,\dots,x_n$ and have weight
$-1$. Here multilinearity means that, for each $i$, exactly one of the
letters $x_i^{(s)}$, $s\ge0$, occurs in each monomial. As noted above,
$D\Var(n)\subseteq W_{\Var}(n)$.

\begin{proposition}[cf. {\cite[Lemma 3.1, Remark 3.3]{KolSartSIGMA2024}}]\label{prop:WeightHadamard}
For every binary operad $\Var$ there are natural $S_n$-module isomorphisms
\[
W_\Var(n)\cong\Nov(n)\otimes\Var(n),\qquad n\ge1,
\]
under which $D\Var(n)$ corresponds to
$(\Nov\medcirc\Var)(n)$. Consequently, the following conditions
are equivalent:
\begin{itemize}
\item[(i)] $\Nov\medcirc\Var=\Nov\otimes\Var$;
\item[(ii)] for every set $X$, the algebra $D\Var\<X\>$ consists
exactly of the weight-homogeneous polynomials of weight $-1$ in
$\Var\<X^{(\omega)}\>$.
\end{itemize}
\end{proposition}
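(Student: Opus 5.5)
The plan is to build the isomorphism $W_\Var(n)\cong\Nov(n)\otimes\Var(n)$ explicitly, from the known description of free Novikov algebras (Proposition~\ref{prop:WeightNov}) together with a factorization of multilinear differential monomials. Every multilinear monomial in $\Var\<X^{(\omega)}\>$ in $x_1,\dots,x_n$ splits into two independent pieces of data. The first is a multilinear $\Var$-monomial $m(x_1,\dots,x_n)$, where only the tree structure and operations are remembered. The second is the vector of derivative orders $(s_1,\dots,s_n)$. Hence
\[
W_\Var(n)\cong \Var(n)\otimes C_n ,
\]
where $C_n$ is the span of the monomials $x_1^{(s_1)}\cdots x_n^{(s_n)}$ in $\Com\<X^{(\omega)}\>$ with $\sum (s_i-1)=-1$, i.e.\ $\sum s_i=n-1$. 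By Proposition~\ref{prop:WeightNov} applied with $\Var=\Com$, we have $C_n=W_\Com(n)=\Nov(n)$. So I define
\[
\Phi_n\colon \Nov(n)\otimes\Var(n)\to W_\Var(n),\qquad
x_1^{(s_1)}\cdots x_n^{(s_n)}\otimes m(x_1,\dots,x_n)\mapsto m(x_1^{(s_1)},\dots,x_n^{(s_n)}).
\]
This map is well defined and bijective: the multilinear part of $\Var\<X^{(\omega)}\>$ is a direct sum over derivative-order vectors of copies of $\Var(n)$, because substituting the distinct free generators $x_i^{(s_i)}$ into $\Var(n)$ is injective. It is also $S_n$-equivariant, since permuting variables acts simultaneously on both factors.

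Next I check that $\Phi=(\Phi_n)$ is compatible with operadic compositions, where on the $W$ side composition is substitution. The key observation is that substituting $g(x_j,\dots)$ into the slot $x_i^{(s)}$ yields $d^s g$, which the Leibniz rule expands in both $\Com$ and $\Var$ by the same multinomial distribution of derivatives over the variables. Under $\Phi$ this matches the componentwise composition in $\Nov\otimes\Var$. The Novikov composition is itself computed by exactly this substitution in $\Com\<X^{(\omega)}\>$, by the embedding of \cite{DzhLofwall2002}. Thus $\Phi$ is an operad isomorphism from $\Nov\otimes\Var$ onto the operad $W_\Var$ of weight $-1$ multilinear differential polynomials.

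In arity 2, $\Phi$ sends $\nu_1\otimes\mu=x_1x_2'\otimes\mu$ to $\mu(x_1,x_2')=x_1\prec_\mu x_2$, and $\nu_2\otimes\mu$ to $x_1\succ_\mu x_2$. So $\Phi$ carries $\Nov(2)\otimes\Var(2)$ onto the span of the derived operations. Since $D\Var(n)$ is the span of all compositions of the derived operations, namely the multilinear part of the subalgebra generated by $X$, it corresponds under $\Phi$ to the sub-operad generated in arity 2, which is $(\Nov\medcirc\Var)(n)$.

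For the equivalence, (i) holds iff $D\Var(n)=W_\Var(n)$ for all $n$. This equality is the multilinear case of (ii), and multilinear elements suffice in characteristic 0: one can linearize a weight $-1$ homogeneous element and then restitute variables, both operations preserving membership in $D\Var\<X\>$ and in the weight $-1$ space. I expect the main obstacle to be the compatibility of $\Phi$ with composition. There one must verify carefully that the Leibniz expansion of $d^s$ applied to a $\Var$-monomial factors as a tensor product of the $\Com$ expansion and the unchanged $\Var$ tree. I would first check this on $\circ_i$ compositions with one binary generator and then induct.
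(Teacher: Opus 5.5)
Your proposal is correct and follows essentially the same route as the paper. It uses the same map $x_1^{(s_1)}\cdots x_n^{(s_n)}\otimes f\mapsto f(x_1^{(s_1)},\dots,x_n^{(s_n)})$ built from the basis of $\Nov(n)$ given by Proposition~\ref{prop:WeightNov}, the same Leibniz-rule matching of compositions, and the same identification of the arity-2 generators with $\prec_i,\succ_i$. The only difference is minor: to pass from multilinear to arbitrary elements, the paper simply observes that every weight $-1$ word is obtained from a multilinear one by identifying variables, so no linearization is needed, whereas your linearize-and-restitute argument also works because $\operatorname{char}\Bbbk=0$.
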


\begin{proof}
By Proposition~\ref{prop:WeightNov}, the differential commutative
monomials
\[
x_1^{(s_1)}\cdots x_n^{(s_n)},\qquad
s_i\ge0,\quad s_1+\cdots+s_n=n-1,
\]
form a basis of $\Nov(n)$. For each fixed tuple $(s_1,\dots,s_n)$,
the multilinear component of $\Var\<X^{(\omega)}\>$ on the letters
$x_1^{(s_1)},\dots,x_n^{(s_n)}$ is a copy of $\Var(n)$.
The components belonging to different tuples form a direct sum,
since the ideal of identities of $\Var$ is multihomogeneous in
the alphabet $X^{(\omega)}$. Therefore the map
\[
x_1^{(s_1)}\cdots x_n^{(s_n)}\otimes f(x_1,\dots,x_n)
\longmapsto f(x_1^{(s_1)},\dots,x_n^{(s_n)})
\]
is an isomorphism onto $W_\Var(n)$. It is equivariant for the
simultaneous permutation of variables.

Under these maps, a tensor of binary operations corresponds to
one of the operations $\prec_i$, $\succ_i$.
Composition corresponds to differential substitution: the Leibniz
rule gives the same distribution of derivatives in the Novikov
factor and in the differential $\Var$-polynomial. It follows,
by induction on the number of compositions, that the generated
sub-operad corresponds exactly to the multilinear part of
$D\Var\<X\>$. This also gives the identification
$D\Var=\Nov\medcirc\Var$ from \cite{KolSartOrazg2019}.

Thus (i) is equivalent to the weight criterion for multilinear
polynomials on distinct variables. An arbitrary word of weight
$-1$ is obtained from such a word by identifying variables.
Conversely, if (ii) holds, projection onto the multilinear
component preserves the derived subalgebra. These observations
give the equivalence for all sets~$X$.
\end{proof}

In particular, $W_{\Var}$ is an operad such that $D\Var$ is exactly its sub-operad generated by $W_{\Var}(2)$.

\begin{corollary}[\cite{KolSartOrazg2019}]
For every binary operad $\Var$ we have
\[
D\Var = \Nov\medcirc \Var.
\]
\end{corollary}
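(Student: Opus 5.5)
The corollary is essentially the second half of Proposition~\ref{prop:WeightHadamard}, so I will prove it by reading off the identification made there rather than by a new computation. Both $D\Var$ and $\Nov\medcirc\Var$ are binary operads, so it suffices to give an isomorphism in arity~2 that extends to an isomorphism of the generated sub-operads inside a common ambient operad. The ambient operad will be $W_\Var\cong\Nov\otimes\Var$.

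First, I fix the operad structure on $W_\Var$. Composition is differential substitution: insert $g(x_1,\dots,x_m)$ into the slot of $x_i^{(s)}$ in $f$ by replacing $x_i^{(s)}$ with $d^s(g)$, expanded by the Leibniz rule and followed by a relabelling of variables. Weights add under this substitution. Indeed, $d^s g$ has weight $\wt(g)+s=s-1=\wt(x_i^{(s)})$, so weight $-1$ is preserved. Hence $W_\Var$ is an operad, and by definition $D\Var(n)$ is the multilinear part of the subalgebra of $\Var\<X^{(\omega)}\>^{(d)}$ generated by $X$. That part is exactly the sub-operad of $W_\Var$ generated by $W_\Var(2)$, which is spanned by the monomials $x_1\circ_i x_2'$ and $x_1'\circ_i x_2$, that is, by the operations $\prec_i$ and $\succ_i$.

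Second, I check that the linear isomorphism $x_1^{(s_1)}\cdots x_n^{(s_n)}\otimes f\mapsto f(x_1^{(s_1)},\dots,x_n^{(s_n)})$ from the proposition is a morphism of operads $\Nov\otimes\Var\to W_\Var$. On the Novikov side I use the Dzhumadulaev--L\"ofwall realization, Proposition~\ref{prop:WeightNov}. There $\Nov(n)$ is spanned by differential commutative monomials of weight $-1$, and composition is the same differential substitution inside $\Com\<X^{(\omega)}\>$. Substituting $g$ for $x_i$ means applying $d^{s_i}$ to it. By the Leibniz rule this distributes derivatives over the variables of $g$ as a sum over multi-indices with multinomial coefficients. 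These coefficients depend only on how the derivatives are distributed, not on the operations, so they are the same in the commutative factor and in the $\Var$ factor. Hence the map intertwines the component-wise composition of the Hadamard product with substitution in $W_\Var$. Equivariance is clear. This is the step that needs care: one must confirm that the Leibniz expansion of $d^s$ applied to a $\Var$-monomial matches term-by-term the expansion of the corresponding commutative monomial. I expect this to follow by induction on $s$ from $d(u\circ_i v)=d(u)\circ_i v+u\circ_i d(v)$.

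Finally, in arity~2 the map sends $x_1x_2'\otimes\mu$ to $x_1\prec_\mu x_2$ and $x_1'x_2\otimes\mu$ to $x_1\succ_\mu x_2$. It therefore carries $\Nov(2)\otimes\Var(2)$ onto $W_\Var(2)=D\Var(2)$. An operad isomorphism maps the sub-operad generated by a subspace onto the sub-operad generated by its image. Hence $\Nov\medcirc\Var$, the sub-operad generated by $\Nov(2)\otimes\Var(2)$, is isomorphic to the sub-operad of $W_\Var$ generated by $W_\Var(2)$, which is $D\Var$.
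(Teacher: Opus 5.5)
Your proposal is correct and follows the paper's own argument. The paper proves this inside Proposition~\ref{prop:WeightHadamard} by identifying $W_\Var(n)$ with $\Nov(n)\otimes\Var(n)$ via $x_1^{(s_1)}\cdots x_n^{(s_n)}\otimes f\mapsto f(x_1^{(s_1)},\dots,x_n^{(s_n)})$, and by noting that the Leibniz rule distributes derivatives identically in both factors, so the sub-operad generated in arity~2 matches the multilinear part of $D\Var\<X\>$. Your version, an operad isomorphism $\Nov\otimes\Var\cong W_\Var$ followed by the general fact about generated sub-operads, is a cleaner packaging of the paper's induction on the number of compositions.
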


Note that $\medcirc = \medcirc_{GK}$ here.
In some cases, we may conclude that
$\Nov\medcirc \Var = \Nov\medcirc_R \Var$. Namely, for $\Var=\Com$ this was shown in \cite{DzhLofwall2002}. For $\Var=\Lie$, the variety of Lie algebras is a 2-variety (see Section~\ref{sec:Weight}), so Corollary~\ref{cor:Dimensions} below gives $\dim D\Lie(n)=\binom{2n-2}{n-1}(n-1)!=n!\,C_{n-1}$, where $C_{n-1}$ is the Catalan number; this is the dimension of the arity~$n$ component of the operad of magmatic algebras with one operation.
Hence,
$\Nov\medcirc \Lie$ is the operad of magmatic algebras, as well as $\Nov\medcirc_R \Lie$.
For $\Var=\As$ we also have
$\Nov\medcirc \As = \Nov\medcirc_R \As$
since it was shown in \cite[Section 5]{SK_Europ}
that the relations on the operations $\prec$, $\succ $ of $\Nov\medcirc_R \As$ are enough to embed every algebra with these operations into a differential associative algebra.

\section{On the weight criterion}\label{sec:Weight}

Let $\Var $ be a binary operad generated by a finite set of operations $\mu_i = x_1\circ_i x_2$, $i\in I$. We assume that for every $i\in I$ there exists
$j\in I$ such that $(12)\mu_i = x_2\circ_i x_1$ is equal to $\pm \mu_j$.
We may and will choose these generators to form a linear basis of
$\Var(2)$.

\begin{theorem}\label{thm:WeightCriterion}
The following conditions are equivalent:
\begin{itemize}
\item [(i)] $\Nov \otimes \Var = \Nov \medcirc \Var $;
\item [(ii)] For every $i,j\in I$ there exists
$F\in (\Bbbk(13)+\Bbbk (23))(\Var(2)\vec\otimes \Var(2))$
such that the image of
\[
(\mu_i\vec\otimes \mu_j) - F
\in \mathcal F_{\Var(2)} (3)
\]
is zero in $\Var(3)$.
\end{itemize}
\end{theorem}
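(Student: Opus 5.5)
Via Proposition~\ref{prop:WeightHadamard}, condition (i) says $W_\Var(n)=D\Var(n)$ for all $n$: every multilinear differential $\Var$-polynomial of weight $-1$ lies in the subalgebra generated by $X$ under $\prec_i,\succ_i$. The plan is to test this first in arity~3, where it amounts exactly to (ii), and then to prove that (ii) alone drives an induction on $n$.

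\emph{Necessity (i)$\Rightarrow$(ii).} Take the weight $-1$ element $(x_1\circ_i x_2)\circ_j x_3''$, i.e.\ the tensor $x_3''\otimes \mu_j\vec\otimes\mu_i$ in $\Nov(3)\otimes\Var(3)$ with notation fixed so that the left factor is $\mu_i\vec\otimes\mu_j$ as in (ii). Every element of $D\Var(3)$ is a combination of products $(a\ast b)\ast' c$ and $a\ast(b\ast' c)$ of variables with $\ast,\ast'\in\{\prec_k,\succ_k\}$. In any such product the derivative of weight $2$ on a single letter can only occur if that letter sits in the ``outer'' slot of one product and the inner slot of another. For $(x_a\ast x_b)\ast' x_c$, the letter $x_c$ receives at most one derivative, so $x_3''$ must be an argument of the inner product. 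Thus, projecting $D\Var(3)$ onto the component with derivative pattern $(0,0,2)$ and expanding via the Leibniz rule, every term is a $\Var$-monomial in which $x_3$ occurs in the inner multiplication, i.e.\ lies in the image of $(\Bbbk(13)+\Bbbk(23))(\Var(2)\vec\otimes\Var(2))$. Since the $(0,0,2)$ components form a copy of $\Var(3)$ (proof of Proposition~\ref{prop:WeightHadamard}), the equality $D\Var(3)=W_\Var(3)$ forces $\mu_i\vec\otimes\mu_j$ to be congruent modulo $R_\Var$ to such an $F$. The sign conventions with $(12)\mu_i=\pm\mu_j$ just permit relabeling; I would check that the precise orientation of which variable is third matches (ii).

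\emph{Sufficiency (ii)$\Rightarrow$(i).} I would prove by induction on $n$ that every $f(x_1^{(s_1)},\dots,x_n^{(s_n)})$ with $\sum s_i=n-1$ and $f$ a $\Var$-monomial lies in $D\Var(n)$. Write the monomial as $g\circ_k h$ with $g,h$ on disjoint variable sets $A,B$ of sizes $p,q$. Let $\sigma_A=\sum_{i\in A}s_i$ and $\sigma_B$ likewise, so $\sigma_A+\sigma_B=p+q-1$. If $\sigma_A=p-1$ and $\sigma_B=q$, then some letter of $B$ carries a derivative. Using induction (weight $-1$ pieces in $D\Var$) and the Leibniz rule, $g\prec_k h_0$ with $h=d(h_0)$-type manipulation expresses the monomial, modulo terms with derivatives shifted, as an element of $D\Var$; symmetrically for $\sigma_A=p$. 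This needs an auxiliary statement: weight-$0$ elements of $\Var\<X^{(\omega)}\>$ equal $d(D\Var)$ plus something controllable, which I would prove by an internal induction on the maximal derivative order. The problematic case is when $\sigma_A\ge p+1$ (so the left factor is ``overloaded''). Here I apply (ii) to the top: writing $g=g_1\circ_l g_2$, relation (ii) rewrites $(g_1\circ_l g_2)\circ_k h$ as a combination of monomials where $h$ sits inside an inner product with $g_1$ or $g_2$. This rebalances the weights, and one repeats, with termination measured by the imbalance $\sigma_A-(p-1)$ decreasing, or by the size of the left factor.

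The main obstacle is sufficiency: making the rebalancing terminate and handling the weight bookkeeping. A cleaner route I would try first is dimension counting. Show that (ii) makes $\mathcal F$-monomials in right-normed ``comb'' shape span $\Var(n)$ modulo relations in a controlled way, and then that $D\Var(n)$ surjects onto each derivative-pattern component. Equivalently, for each pattern $(s_1,\dots,s_n)$ I would exhibit a tree shape whose $\prec/\succ$ labelings produce that pattern on its leading term. Then (ii) guarantees that every $\Var$-monomial can be rewritten into that shape, up to terms with a strictly smaller pattern in a suitable order, and triangularity finishes the argument.
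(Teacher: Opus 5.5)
Your necessity argument is correct, and a little slicker than the paper's. The paper takes the pattern-$(1,1,0)$ element $(x_1'\circ_j x_2')\circ_i x_3$ and reads off a coordinate in an explicit basis of $\Nov(3)$. You instead observe that $x_3''$ can only arise when $x_3$ is an inner argument. Hence the projection of $D\Var(3)$ to the pattern-$(0,0,2)$ summand lies in the span of monomials with outer argument $x_1$ or $x_2$, while under (i) this projection is the whole copy of $\Var(3)$.

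The sufficiency direction, however, is not proved, and the mechanisms you propose do not work as stated.
\begin{itemize}
\item In the case $\wt(g)=-1$, $\wt(h)=0$ you rely on an auxiliary statement that is never formulated. Weight-$0$ elements are far from $d(D\Var)$: on two letters they form three copies of $\Var(2)$, while $d(D\Var(2))$ has dimension at most $2\dim\Var(2)$. Nothing controls the leftover term $g\circ_k r$.
\item In the overloaded case the rewriting can loop, and neither of your measures is monotone. Take $g_1$ a product of two naked letters, $g_2=x^{(4)}$, and $h$ of weight $-2$ on three letters. Then (ii) produces the term $(h\circ g_2)\circ g_1$, whose overloaded factor $h\circ g_2$ is larger than $g$ and has the same imbalance $2$. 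Applying (ii) to it with inner pair $h,g_2$ gives back $(g_1\circ g_2)\circ h$.
\item The triangularity route assumes that (ii) lets you rewrite every monomial into a prescribed shape. For a single leaf-labelled tree this is the treewise surjectivity of Theorem~\ref{thm:DerivedDong}. That condition is strictly stronger than (ii) and fails for $\Lie$ and $\Nov$, although both satisfy (ii). For the family of all trees compatible with a given derivative pattern, showing that this family spans $\Var(n)$ is the entire content of the sufficiency direction.
\end{itemize}

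The missing idea is the paper's two-stage reduction.
\begin{enumerate}
\item Condition (ii) is used only to create, by induction on degree, a subword $x\circ_i y^{(s)}$ with $x\in X$ naked and $s\ge 1$ (Lemma~\ref{lem:Wedding}). If the root separates a purely naked factor from a purely derived one, a single application of (ii) to a splitting of one factor yields mixed inner factors of smaller degree. Induction applies to those, so nothing loops.
\item That subword is replaced by $(x\prec_i y)^{(s-1)}-\sum_{l=1}^{s-1}\binom{s-1}{l}x^{(l)}\circ_i y^{(s-l)}$, with $z=x\prec_i y$ treated as a new letter. All resulting words still have weight $-1$ and strictly smaller potential. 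Induction on the potential, uniform over all alphabets, then finishes the proof (Lemma~\ref{lem:Rewriting}).
\end{enumerate}
Each step absorbs a derivative into a new generator, so no weight-$0$ bookkeeping and no balancing at the root are ever needed.
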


Practically, the condition (ii) means that
for every $i,j\in I$ there is an identity that holds on $\Var $ of the form
\begin{equation}\label{eq:ToInside}
(x_1\circ_j x_2)\circ_i x_3=\sum\limits_{k,l\in I}
\bigl(\alpha^{i,j}_{k,l} (x_3\circ_k x_2)\circ _l x_1 +
\beta^{i,j}_{k,l} (x_1\circ _k x_3)\circ _l x_2\bigr),
\end{equation}
for some
$\alpha^{i,j}_{k,l},\beta^{i,j}_{k,l}\in \Bbbk $.

For example, it works for $\Var = \Nov$ from Example~\ref{exmp:Nov}:
\[
\nu_1\vec\otimes \nu_1 = (x_1x_2)x_3 = (x_1x_3)x_2
\]
by \eqref{eq:RCom}, and
\[
\nu_2\vec\otimes \nu_1 = x_3(x_1x_2)
=
x_1(x_3x_2) + (x_3x_1)x_2 - (x_1x_3)x_2
\]
by \eqref{eq:LSym}.
It was shown in \cite{KolSartSIGMA2024} that (i) holds for
$\Var = \Nov$. Now we prove the general statement.

\begin{proof}
(i) $\Rightarrow $ (ii).
 Choose a linear basis
 $\nu_1$, $\nu_2=(12)\nu_1$
 of $\Nov(2)$, $\nu_1 = x_1x_2$, $\nu_2=x_2x_1$.
Denote $D\Var(2) = \Nov(2)\otimes \Var(2)$.

If (i) holds then, in particular,
there is an $S_3$-invariant surjective map
\[
\pi(3):
\mathcal F_{D\Var(2)}(3)
\to \Nov(3)\otimes \Var(3)\cong  D\Var (3),
\]
sending
$(\nu_p\otimes \mu_i)\vec\otimes (\nu_q\otimes \mu_j)$ to
$(\nu_p\vec\otimes \nu_q) \otimes (\mu_i\vec\otimes \mu_j)$,
$i,j\in I$, $p,q=1,2$,
such that the kernel of $\pi(3) $
consists of all
identities of degree 3 that hold on $D\Var $.

Given a pair $i,j\in I$,
suppose $\hat f \in \mathcal F_{D\Var(2)}(3)$ is a pre-image of the element
\[
f = ((13)(\nu_1\vec\otimes \nu_1))\otimes (\mu_i\vec\otimes \mu_j) \in \Nov(3)\otimes \Var(3).
\]
In general form,
\[
\hat f = \sum\limits_{p,q=1}^2 \sum\limits_{k,l\in I}
(a_{kl}^{pq} +b_{kl}^{pq}(13) + c_{kl}^{pq}(23))
 (\nu_p\otimes \mu_k)\vec\otimes (\nu_q\otimes \mu_l),
\]
where the coefficients $a,b,c$ are some scalars from $\Bbbk $.
Therefore,
\begin{equation}\label{eq:ImageF}
f
=
\pi(3)(\hat f)
=
\sum\limits_{p,q=1}^2 \sum\limits_{k,l\in I}
(a_{kl}^{pq} +b_{kl}^{pq}(13) + c_{kl}^{pq}(23))
\bigl((\nu_p\vec\otimes\nu_q)
\otimes(\mu_k\vec\otimes\mu_l)\bigr).
\end{equation}
Here the permutations act diagonally on the two tensor factors.
This is an equality in $\Nov(3)\otimes \Var(3)$.
Using the basis of $\Nov(3)$ from \cite{DzhLofwall2002} (see also Example~\ref{exmp:Nov}), we obtain a series of equations in $\Var(3)$.
Choose a basis of $\Nov(3)$ which consists of
\[
\nu_p\vec\otimes \nu_q,
\
(13)(\nu_p\vec\otimes \nu_p),\quad p,q=1,2,
\]
six elements in total. The remaining elements
of $\mathcal F_{\Nov(2)}(3)$ turn into the following combinations in $\Nov(3)$:
\begin{align*}
    (13)(\nu_1\vec\otimes \nu_2) & {}= (\nu_1\vec\otimes \nu_2),
    \\
    (13)(\nu_2\vec\otimes \nu_1) & {}=
    (\nu_1\vec\otimes \nu_1) + (\nu_2\vec\otimes \nu_1) - (13)(\nu_1\vec\otimes \nu_1),\\
    (23)(\nu_1\vec\otimes \nu_1) & {}=
    (\nu_1\vec\otimes \nu_1),\\
    (23)(\nu_1\vec\otimes \nu_2) & {}=
    (13)(\nu_1\vec\otimes \nu_1),\\
    (23)(\nu_2\vec\otimes \nu_1) & {}=
     (13)(\nu_2\vec\otimes \nu_2) - (\nu_1\vec\otimes \nu_1) + (\nu_1\vec\otimes \nu_2),
    \\
    (23)(\nu_2\vec\otimes \nu_2) & {}=
    (\nu_2\vec\otimes \nu_2)  + (\nu_1\vec\otimes \nu_2) -(13)(\nu_1\vec\otimes \nu_1) .
\end{align*}
Collect those terms in the right-hand side of \eqref{eq:ImageF} that contain the first tensor factor
$(13)(\nu_1\vec\otimes \nu_1)$.
As a result, \eqref{eq:ImageF} implies
\[
\mu_i\vec\otimes \mu_j =
\sum\limits_{k,l\in I}
\big( (b^{11}_{kl}-b_{kl}^{21}) (13) + (c_{kl}^{12}-c^{22}_{kl})(23) \big) (\mu_k\vec\otimes \mu_l)
\]
in $\Var(3)$, as desired.

(ii) $\Rightarrow$ (i).
We use Proposition~\ref{prop:WeightHadamard}. It is enough to show that the weight criterion determines the elements of $D\Var\<X\>\subset \Var\<X^{(\omega )}\>$.

Suppose (ii) holds for $\Var $.
Consider a monomial $u$
from
the free algebra $\Var\<X^{(\omega )}\>$:
this is a non-associative word in the alphabet
$X^{(\omega )}$. Let us say that the
letters $x\in X\subset X^{(\omega )}$
are ``naked''; other letters of the form
$x^{(s)}$, $s>0$, are said to be ``derived''.

\begin{lemma}\label{lem:Wedding}
Suppose a monomial $u\in \Var\<X^{(\omega )}\>$
contains both naked and derived letters.
Then $u$ may be presented as a linear combination of monomials $u_k$, $k=1,2,\dots$, such that every $u_k$ contains a subword of the form
$(x_k \circ_{i_k} y_k^{(s_k)})$,
$x_k,y_k\in X$, $i_k\in I$, $s_k>0$
(by the symmetry of the set of operations, a subword $(y_k^{(s_k)}\circ_{j} x_k)$ is of this form as well).
\end{lemma}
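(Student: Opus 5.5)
We argue by induction on the degree of $u$, using identity \eqref{eq:ToInside}, which is available under hypothesis (ii), to move letters into inner positions. Since $u$ contains both kinds of letters, it has a subword $w=(a\circ_i b)$ in which one factor contains a naked letter and the other contains a derived letter. To get one, descend in the tree of $u$: at each node where both kinds occur below, either one child is purely naked and the other purely derived, or some child still contains both kinds and we descend into it. Descent ends at a node whose children are "pure", one naked-only and one derived-only. It therefore suffices to treat a product $w=a\circ_i b$ with $a$ consisting only of naked letters and $b$ only of derived letters (or the reverse, which is symmetric because the operation set is closed under $(12)$ up to sign). Since multiplying a linear combination of monomials by fixed factors preserves the property "contains a subword $(x\circ_k y^{(s)})$", the claim for $w$ implies the claim for $u$.

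For the case $w=a\circ_i b$ we induct on $\deg a+\deg b$. If both $a$ and $b$ are letters, $w$ already has the required form. Suppose $\deg a\ge 2$, so $a=a_1\circ_j a_2$. Applying \eqref{eq:ToInside} with $x_1=a_1$, $x_2=a_2$, $x_3=b$ gives
\[
(a_1\circ_j a_2)\circ_i b=\sum \alpha\,(b\circ_k a_2)\circ_l a_1+\beta\,(a_1\circ_k b)\circ_l a_2 .
\]
In each term on the right, the inner product pairs a naked-only monomial ($a_2$ or $a_1$) with the derived-only monomial $b$, and has strictly smaller total degree than $w$. By induction each inner product is a combination of monomials containing the desired subword, and so is the whole term.

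Now suppose $\deg a=1$ and $\deg b\ge2$. Use the symmetric form of \eqref{eq:ToInside}: write $w=\pm\, b\circ_{i'} a$ with $b=b_1\circ_j b_2$, and apply \eqref{eq:ToInside} with $x_1=b_1$, $x_2=b_2$, $x_3=a$. This produces inner products $a\circ_k b_2$ and $b_1\circ_k a$, again pairing naked with derived and of smaller degree, so induction applies.

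The main point to check, and the only real obstacle, is that the induction measure actually decreases. It does, because each inner product involves $a$ together with only a proper part of $b$, or $b$ together with only a proper part of $a$. One also has to check that the outer multiplications never destroy a subword. This holds because the property refers to subwords, and in a free algebra substituting and multiplying monomials preserves subwords. Finally, the sign conventions $(12)\mu_i=\pm\mu_j$ have to be tracked; they only change coefficients.
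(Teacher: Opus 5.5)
Your proof is correct and follows essentially the same route as the paper. In both, the case of a purely naked factor times a purely derived factor is reduced by applying \eqref{eq:ToInside} to whichever factor is a product, which produces mixed inner products of smaller degree to which induction applies. The only cosmetic difference is that you first descend to a minimal mixed subword and induct on its degree, whereas the paper inducts on $\deg u$ directly and invokes the induction hypothesis on a mixed factor when one exists.
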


\begin{proof}
The degree of $u$ with respect to $X^{(\omega )}$ is greater than 1 since it contains at least one naked and one derived letter. Proceed by induction on $n = \deg u$. The case $n=2$ is obvious. If $n=3$ then, up to the symmetry of the operations,
$u = (x_1^{(s_1)}\circ _j x_2^{(s_2)})\circ _i x_3^{(s_3)}$, $s_1,s_2,s_3\ge 0$.
Suppose $x_1^{(s_1)}\circ _j x_2^{(s_2)}$
is not of the desired form (up to permutation of factors). Then either $s_3=0$, $s_1,s_2>0$, or
$s_3>0$, $s_1=s_2=0$. In both cases
the relation \eqref{eq:ToInside} allows us to rewrite the monomial $u$ in the desired form.

Now let $n\ge 4$. Then $u = u_1\circ_i u_2$,
$i\in I$, $u_1$ and $u_2$ are of smaller degree.
The worst case (when we cannot apply induction immediately) is when $u_1$ contains only naked letters and $u_2$ contains only derived letters.
Indeed, if one factor contains both kinds of letters, induction applies
to that factor. Otherwise the factors are pure and of different kinds;
by symmetry we may assume the indicated order.

At least one of $u_1$ or $u_2$ is a product of two shorter words, say, $u_1 = v_1\circ_j v_2$.
Then apply \eqref{eq:ToInside} to
\[
u = (v_1\circ_j v_2)\circ_i u_2
\]
to get a linear combination of words like
$(u_2\circ_k v_2)\circ _l v_1$,
$(v_1\circ_k u_2)\circ _l v_2$, $k,l\in I$.
The factors
$(u_2\circ_k v_2)$
and
$(v_1\circ_k u_2)$
contain both naked and derived letters,
so the induction hypothesis applies.
In the same way, the case $u_2 = v_1\circ_j v_2$
is processed.
For this, interchange the two outer factors using the symmetry of the
operations and apply the same argument. The roles of naked and derived
letters may be interchanged. In either case the mixed factor to which
induction applies has degree strictly smaller than~$n$.
\end{proof}

\begin{lemma}\label{lem:Rewriting}
Suppose a monomial $u\in \Var\<X^{(\omega )}\>$
is of weight $-1$.
Then $u$ belongs to $D\Var \<X\>$.
\end{lemma}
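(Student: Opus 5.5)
Write $\deg u$ for the number of letters of $u$ (counted with multiplicity). I would prove the lemma by induction on $n=\deg u$, using Lemma~\ref{lem:Wedding} to create a subword that is itself a derived product, and then contracting that subword to a new naked letter.

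\emph{Reduction to the multilinear case.} Since the weight and the membership in $D\Var\<X\>$ are both preserved under renaming and identifying letters of $X$, we may first pass to a word with pairwise distinct underlying variables. Indeed, if the multilinear version lies in $D\Var\<X\>$, then substituting equal letters keeps it there, because $D\Var\<X\>$ is a subalgebra generated by $X$ and substitution of generators $x\mapsto y$ is an endomorphism commuting with~$d$.

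\emph{Base case.} For $n=1$, weight $-1$ forces $u=x\in X$, which lies in $D\Var\<X\>$.

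\emph{Inductive step, case of mixed letters.} Let $n\ge2$. The weight of $u$ equals $\sum s_k - n = -1$, so $\sum s_k=n-1$. Hence not all letters are derived, and not all are naked. By Lemma~\ref{lem:Wedding}, $u$ is a linear combination of monomials $u_k$ with the same multiset of letters, each containing a subword $x\circ_i y^{(s)}$ with $s>0$. Fix one such $u_k$. Since the subalgebra is closed under the derived operations and generated by $X$, it suffices to handle $u_k$.

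\emph{Contraction of the subword.} Write
\[
x\circ_i y^{(s)} = x\prec_i y^{(s-1)} .
\]
Introduce a new letter $z$ and the substitution $z\mapsto x\prec_i y$. Then $d^{m}$ applied to $x\prec_i y$ expands by Leibniz. Here is the obstacle: the subword is $x\circ_i y^{(s)}$ with $x$ naked, but $y^{(s)}$ corresponds to $d^{s-1}(x\prec_i y)$ only modulo other terms.

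So I would instead argue by downward induction. Let $w$ be $u_k$ with the subword replaced by $z^{(s-1)}$. Then $w$ has weight $-1$, because
\[
\wt(x\circ_i y^{(s)}) = -1 + s - 1 = s-2 = \wt(z^{(s-1)}),
\]
and $w$ has degree $n-1$. By induction, $w\in D\Var\<X\cup\{z\}\>$. Substituting $z\mapsto x\prec_i y$ (a homomorphism of differential algebras restricted to $D\Var$) gives an element of $D\Var\<X\>$. That element equals $u_k$ plus terms in which the subword is replaced by $d^{s-1}(x\circ_i y')$ with the derivatives distributed differently, namely $x^{(a)}\circ_i y^{(b+1)}$ with $a+b=s-1$ and $a>0$.

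These correction terms are again monomials of weight $-1$ and degree $n$, but they have fewer naked letters. Hence I would run the whole argument as a nested induction on $(n,\ \text{number of derived letters})$ ordered lexicographically, or equivalently by descending number of naked letters. The correction terms are then handled by the induction hypothesis, because they have strictly fewer naked letters while $n$ is unchanged. The number of naked letters is at least $1$ in any weight $-1$ word, so the descent terminates: when a correction word has very few naked letters, Lemma~\ref{lem:Wedding} still applies, since mixed letters are guaranteed. This terminating ordering is the main point to check carefully.
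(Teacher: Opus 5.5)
Your argument is correct and is essentially the paper's. Both use the same step: contract the subword $x\circ_i y^{(s)}$ to a new letter $z^{(s-1)}$ over the alphabet $X\sqcup\{z\}$, evaluate $z\mapsto x\prec_i y$, and control the Leibniz correction terms $x^{(a)}\circ_i y^{(s-a)}$, $1\le a\le s-1$.

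The only difference is the termination measure. The paper inducts on the potential $\pt(u)$, bounding the degree so that only finitely many potentials occur. You use the pair (degree, number of naked letters) lexicographically, proved for all alphabets at once. This works provided fewer naked letters (more derived letters) counts as smaller, as your phrase ``descending number of naked letters'' intends; read literally, ``lexicographic in the number of derived letters'' is the wrong direction. Your reduction to the multilinear case is unnecessary.
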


\begin{proof}
We have to show that $u$
is generated by $X$ in $\Var\Der\<X\>$
relative to the operations $x\prec_i y = x\circ_i d(y)$ and $x\succ_i y = d(x)\circ_i y$, $i\in I$.
(Note that if $x\circ_i y =\pm y\circ_j x$
then $x\prec_i y = \pm y\succ _j x$, so the ``duplicated'' set of operations $\prec_i$, $\succ_i$ meets our initial assumption on symmetry.)

If $\deg u=1,2$ then the statement is obvious.
For $\deg u\ge 3$, proceed by induction on the
{\em potential} of $u$. The latter (denoted
$\pt(u)$) is defined as follows \cite{SK_Europ}.
Given a non-associative word
\[
u = (x_1^{(s_1)}\circ_{i_1}\dots \circ_{i_{n-1}} x_n^{(s_n)}),\quad x_l\in X,\ s_l\ge 0,
\]
the potential of $u$ is a polynomial in $\mathbb Z_+[t]$:
\[
\pt(u) = \sum\limits_{k\ge 0} N_k t^k,
\]
where $N_k$ is the number of indices $l$ such that $s_l=k$. For example,
\[
\pt((((x_1'\circ_{i_1} x_2'')\circ_{i_2} (x_3\circ_{i_3} x_4))\circ_{i_4} x_5'))= 2+2t+t^2.
\]
Note that the potential is defined as a function on all non-associative words in $X^{(\omega )}$, but it is a well-defined function on the nonzero monomials from $\Var\<X^{(\omega )}\>$ since for a multilinear variety the ideal of identities is $\pt$-homogeneous.

Equivalently, the multihomogeneous grading by the letters $X^{(\omega)}$ induces a grading by these potentials. We may discard zero summands; every application of a multilinear identity preserves the potential of the remaining words.

We will compare potentials as follows:
$\pt(u)<\pt(v)$ if and only if
the leading coefficient of
$\pt(v)-\pt(u)$
is positive. Note that if we rewrite $u$ into a linear combination of $u_k$ as in Lemma~\ref{lem:Wedding} then $\pt(u_k)=\pt(u)$
for all $k=1,2,\dots $.

Fix a bound $n$ on the degree and prove the statement for all
alphabets and all words of degree at most~$n$ simultaneously.
A word of weight $-1$ and degree at most~$n$ has total derivative
order at most $n-1$, so there are only finitely many possible
potentials. The replacements below never increase the degree.
This justifies induction with the indicated order.

Suppose $p(t)=\pt(u)$, $\deg u\ge 3$, $\wt(u)=-1$,
and the lemma is true for all words of smaller potential. Then $u$ must contain both naked
and derived letters.
By Lemma~\ref{lem:Wedding} we may assume
that $u$ contains a subword $(x\circ_i y^{(s)})$,
$x,y\in X$,
$s\ge 1$.
Replace this subword with
\[
(x\prec _i y)^{(s-1)} -
\sum\limits_{l=1}^{s-1} \binom{s-1}{l} x^{(l)}\circ_i y^{(s-l)},
\]
then $u$ becomes a linear combination of non-associative monomials in the extended alphabet
$(X_1)^{(\omega )}$,
where
\[
X_1 = X\cup \{x\prec_i y \mid x,y\in X,\ i\in I\}.
\]
Each summand in this linear combination
is of weight $-1$ and has smaller potential
(all relative to the new alphabet).
Hence, by induction $u$ may be generated by $X_1$
by means of the derived operations $\prec_i$, $\succ_i $. Since $X_1$ itself is generated by $X$ by the same operations, the claim follows.

To make the change of alphabet precise, introduce a new independent
letter $z$ and use the alphabet
$Y=X\sqcup\{z\}$. Let $u_0$ be obtained by replacing the selected
subword by $z^{(s-1)}$, and let $u_l$ be obtained by replacing it
by $x^{(l)}\circ_i y^{(s-l)}$, $1\le l\le s-1$.
In the free differential algebra on $Y$, consider the evaluation
homomorphism which fixes $X$ and sends $z$ to $x\prec_i y$.
The displayed identity says that $u$ is the image of
\[
u_0-\sum_{l=1}^{s-1}\binom{s-1}{l}u_l.
\]
All these words have weight $-1$. Their potentials are
\[
\pt(u_0)=p(t)-1-t^s+t^{s-1},\qquad
\pt(u_l)=p(t)-1-t^s+t^l+t^{s-l}.
\]
Each is strictly smaller than $p(t)$, since the highest changed coefficient, that of $t^s$, decreases. For $s=1$ the sum is empty and $\pt(u_0)=p(t)-t$. The induction hypothesis over the alphabet $Y$ now implies that these words belong to $D\Var\<Y\>$. Evaluation preserves the derived operations, and the image of $z$ already belongs to $D\Var\<X\>$. Therefore the image of every summand, and hence $u$, belongs to $D\Var\<X\>$.
\end{proof}

To complete the proof of the theorem, recall that all elements from $D\Var\<X\>$ are of weight $-1$. If (ii) holds then by Lemma~\ref{lem:Rewriting} the converse is true. Hence, the weight criterion works for $\Var $ and thus $\Nov\otimes \Var = \Nov\medcirc \Var$.
\end{proof}

Following \cite{Anderson1971,AndersonKleinfeld1976}, we use the following terminology.

\begin{definition}
A variety $\Var$ is called a \emph{2-variety} if, for every $A\in\Var$ and every ideal $J$ of $A$, the subspace
\[
J^2=\operatorname{span}\{a\circ_i b\mid a,b\in J,\ i\in I\}
\]
is an ideal of~$A$.
\end{definition}

The following characterization is a multi-operation version of the classical criterion for 2-varieties; see \cite{Anderson1971,AndersonKleinfeld1976}.

\begin{proposition}\label{prop:TwoVariety}
Condition~{\rm(ii)} of Theorem~\ref{thm:WeightCriterion} is
equivalent to each of the following conditions:
\begin{itemize}
\item[(a)] $\Var$ is a 2-variety;
\item[(b)] the variety governed by $\Var_q$ is a 2-variety.
\end{itemize}
\end{proposition}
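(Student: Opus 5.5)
The plan is to prove (ii)$\Leftrightarrow$(a) for an arbitrary binary operad $\Var$ (with the symmetric set of generators fixed before Theorem~\ref{thm:WeightCriterion}), and then deduce (ii)$\Leftrightarrow$(b) formally. Condition~(ii) only involves $\mathcal F_{\Var(2)}(3)$ and its image in arity~$3$. The canonical epimorphism $\Var_q\to\Var$ is an isomorphism in arities $2$ and $3$. Therefore (ii) holds for $\Var$ if and only if it holds for $\Var_q$. Applying (ii)$\Leftrightarrow$(a) to the binary operad $\Var_q$ then gives (ii)$\Leftrightarrow$(b).

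\emph{(ii)$\Rightarrow$(a).} Let $A\in\Var$, let $J$ be an ideal of $A$, and take $a,b\in J$, $c\in A$. By \eqref{eq:ToInside},
\[
(a\circ_j b)\circ_i c=\sum_{k,l}\bigl(\alpha^{i,j}_{k,l}(c\circ_k b)\circ_l a+\beta^{i,j}_{k,l}(a\circ_k c)\circ_l b\bigr).
\]
Here $c\circ_k b$ and $a\circ_k c$ lie in $J$, since $J$ is an ideal. Hence every summand is a product of two elements of $J$, so it lies in $J^2$. Products of the form $c\circ_i(a\circ_j b)$ reduce to the previous case by the symmetry assumption $(12)\mu_i=\pm\mu_{i'}$. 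Since $J^2$ is spanned by the elements $a\circ_j b$, it follows that $J^2$ is an ideal.

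\emph{(a)$\Rightarrow$(ii).} Take the free algebra $A=\Var\<x_1,x_2,x_3\>$ and let $J$ be the ideal generated by $x_1,x_2$. Then $J$ is spanned by the monomials containing $x_1$ or $x_2$, so $J$ is multihomogeneous. Consequently $J^2$ is spanned by products $a\circ_k b$ of multihomogeneous elements $a,b\in J$. By (a), $(x_1\circ_j x_2)\circ_i x_3\in J^2$.

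Project onto the multilinear component of multidegree $(1,1,1)$. It is spanned by products $a\circ_k b$ in which $a,b$ are multihomogeneous elements of $J$ with complementary multidegrees.
\begin{enumerate}
\item One factor has degree~$1$, so it is $x_1$ or $x_2$, because $x_3\notin J$.
\item The other factor then has degree~$2$ and consists of $x_3$ together with the remaining one of $x_1,x_2$.
\end{enumerate}
So the multilinear part of $J^2$ is spanned by products in which $x_3$ sits in the inner multiplication together with $x_1$ or $x_2$. Using the symmetry of the generating set, each such product can be rewritten as $\pm(x_3\circ_k x_2)\circ_l x_1$ or $\pm(x_1\circ_k x_3)\circ_l x_2$, that is, as an element of $(\Bbbk(13)+\Bbbk(23))(\Var(2)\vecotimes\Var(2))$. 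Since $\Var(3)$ is the multilinear degree-$3$ part of $A$, we obtain exactly an identity of the form \eqref{eq:ToInside}, which is (ii).

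The main point requiring care is the last step. One must justify that the multihomogeneous projection of $J^2$ is spanned by products of multihomogeneous elements of $J$. One must also enumerate the admissible multidegree splittings correctly: a splitting that puts $x_1\circ x_2$ in one factor forces $x_3$ alone in the other, which is impossible since $x_3\notin J$.
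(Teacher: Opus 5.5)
Your proposal is correct and follows the paper's proof essentially step for step. The direction (ii)$\Rightarrow$(a) applies \eqref{eq:ToInside} to $a,b\in J$, $c\in A$ together with the symmetry of the operations; (a)$\Rightarrow$(ii) uses the free algebra on $x_1,x_2,x_3$, the ideal $J$ generated by $x_1,x_2$, and the multilinear component of $J^2$; and (b) follows because $\Var$ and $\Var_q$ coincide in arity~$3$. Your extra care about the multihomogeneity of $J$ and the admissible degree splittings spells out what the paper asserts in one line.
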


\begin{proof}
The equivalence of condition~{\rm(ii)} and~(a) follows by the
standard argument for 2-varieties
\cite{Anderson1971,AndersonKleinfeld1976}, applied to the family
of operations $\circ_i$ and their opposites. Namely, if (ii) holds,
$J$ is an ideal of $A\in\Var$, $a,b\in J$ and $c\in A$, then
\eqref{eq:ToInside} expresses $(a\circ_j b)\circ_i c$ through products
$(c\circ_k b)\circ_l a$ and $(a\circ_k c)\circ_l b$, which lie in $J^2$;
by the symmetry of the set of operations, $J^2$ is an ideal.
Conversely, let $A$ be the free $\Var$-algebra on $x_1,x_2,x_3$ and $J$
the ideal generated by $x_1,x_2$. The multilinear component of $J^2$
is spanned by the products in which $x_1$ or $x_2$ is the outer factor,
so $(x_1\circ_j x_2)\circ_i x_3\in J^2$ gives exactly~\eqref{eq:ToInside}. Finally, $\Var$ and
$\Var_q$ have the same identities of degree~$3$, which proves the
equivalence with~(b).
\end{proof}

\begin{corollary}\label{cor:Dimensions}
If $\Var_q$ is a 2-variety, then
\[
\dim D\Var(n)=\binom{2n-2}{n-1}\dim\Var(n),\qquad n\ge1.
\]
\end{corollary}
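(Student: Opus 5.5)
The plan is to chain together three results already established. First, by Proposition~\ref{prop:TwoVariety}, the hypothesis that $\Var_q$ governs a 2-variety is equivalent to condition~(ii) of Theorem~\ref{thm:WeightCriterion}. Indeed, condition~(ii) only involves identities of degree~3, and $\Var$ and $\Var_q$ share these. The theorem then gives $\Nov\medcirc\Var=\Nov\otimes\Var$. By the Corollary following Proposition~\ref{prop:WeightHadamard}, $D\Var=\Nov\medcirc\Var$, so
\[
\dim D\Var(n)=\dim\Nov(n)\cdot\dim\Var(n)
\]
for all $n$.

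It remains to compute $\dim\Nov(n)$. Here I use the basis recalled in the proof of Proposition~\ref{prop:WeightHadamard}, which relies on Proposition~\ref{prop:WeightNov}. That basis consists of the differential commutative monomials $x_1^{(s_1)}\cdots x_n^{(s_n)}$ with $s_i\ge 0$ and $s_1+\dots+s_n=n-1$. The number of such tuples is the number of weak compositions of $n-1$ into $n$ parts, namely $\binom{(n-1)+n-1}{n-1}=\binom{2n-2}{n-1}$. For $n=1$ this gives $1=\dim\Nov(1)$, which is consistent, and for $n=2,3$ it gives $2$ and $6$, matching Example~\ref{exmp:Nov} and the six-element basis of $\Nov(3)$ used in the proof of Theorem~\ref{thm:WeightCriterion}.

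There is no real obstacle here. The one point deserving care is the first step: the 2-variety condition has to be stated for $\Var_q$ rather than for $\Var$ itself. This is harmless, because Proposition~\ref{prop:TwoVariety}(b) covers exactly this case, and $\Var_q(3)\cong\Var(3)$ via the canonical epimorphism.
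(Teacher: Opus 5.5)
Your proposal is correct and is essentially the paper's proof. You combine Proposition~\ref{prop:TwoVariety} with Theorem~\ref{thm:WeightCriterion} to get $D\Var(n)\cong\Nov(n)\otimes\Var(n)$, then count the basis of $\Nov(n)$ from Proposition~\ref{prop:WeightHadamard} via weak compositions of $n-1$ into $n$ parts, which gives $\binom{2n-2}{n-1}$.
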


\begin{proof}
Theorem~\ref{thm:WeightCriterion} and
Proposition~\ref{prop:TwoVariety} give
$D\Var(n)\cong\Nov(n)\otimes\Var(n)$.
The basis in Proposition~\ref{prop:WeightHadamard} is indexed by
the weak compositions of $n-1$ into $n$ parts, whose number is
$\binom{2n-2}{n-1}$.
\end{proof}

\begin{corollary}\label{cor:Speciality}
If $\Var_q$ is a 2-variety, then every $D\Var$-algebra is special.
\end{corollary}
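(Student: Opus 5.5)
Since $\Var_q$ is a 2-variety, Proposition~\ref{prop:TwoVariety} shows that condition~(ii) of Theorem~\ref{thm:WeightCriterion} holds for $\Var$. The theorem then gives $\Nov\medcirc\Var=\Nov\otimes\Var$. By Proposition~\ref{prop:WeightHadamard}, the weight criterion holds for $\Var$: for every set $X$, the free algebra $D\Var\<X\>$ is exactly the set of weight-homogeneous polynomials of weight $-1$ in $\Var\<X^{(\omega)}\>$. Having reduced the statement to the weight criterion, I would then invoke the result of \cite{KolSartSIGMA2024} quoted in Section~\ref{sec:Derived}: if the weight criterion works for $\Var$, then every $D\Var$-algebra is special.

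For a self-contained argument, I would prove speciality directly. Let $B\in D\Var$ with presentation $B=D\Var\<X\>/I$. Set $A=\Var\<X^{(\omega)}\>/(I)$, where $(I)$ is the differential ideal of $\Var\<X^{(\omega)}\>$ generated by $I$. The map $B\to A^{(d)}$ is then a homomorphism, and it suffices to show that it is injective, i.e. $(I)\cap D\Var\<X\>=I$.

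An element of $(I)$ is a sum of $\Var$-products of derivatives $f^{(k)}$, $f\in I$, with letters from $X^{(\omega)}$. I would grade everything by weight. Each $f^{(k)}$ has weight $k-1$, and the weight-$(-1)$ component of $(I)$ is spanned by weight-$(-1)$ monomials in which some letters are replaced by elements $f^{(k)}$. Such an expression is a differential polynomial of weight $-1$ in the extended alphabet $X\cup\{z\}$, with $z\mapsto f$. By the weight criterion applied over the alphabet $X\sqcup\{z\}$, as in the alphabet-change step of Lemma~\ref{lem:Rewriting}, it lies in $D\Var\<X\sqcup\{z\}\>$. Substituting $z=f\in I$ therefore yields an element of the $D\Var$-ideal generated by $I$, namely $I$ itself.

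The main obstacle is this last step. One must check that the weight-$(-1)$ part of $(I)$ is exactly what is captured this way; in particular, one needs linearity in $z$, obtained by using a separate letter $z_f$ for each occurrence. Since this is precisely the content of the cited result of \cite{KolSartSIGMA2024}, I would rely on that citation and present only the reduction in the first paragraph.
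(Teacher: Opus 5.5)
Your reduction is the paper's proof: Proposition~\ref{prop:TwoVariety} and Theorem~\ref{thm:WeightCriterion} give the weight criterion for $\Var$ (you pass through Proposition~\ref{prop:WeightHadamard} explicitly), and \cite[Theorem~3.5]{KolSartSIGMA2024} then gives speciality. Your self-contained sketch outlines that cited result and is not needed for the corollary.
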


\begin{proof}
The weight criterion follows from Theorem~\ref{thm:WeightCriterion}
and Proposition~\ref{prop:TwoVariety}. Apply
\cite[Theorem~3.5]{KolSartSIGMA2024}.
\end{proof}

\begin{example}
The criterion holds for $\Com$, $\As$, and $\Lie$.
For the associative cases use associativity and the opposite
operations. For $\Lie$, the required identity is
\[
[[x_1,x_2],x_3]=[[x_3,x_2],x_1]+[[x_1,x_3],x_2].
\]
For $\Nov$, the computations preceding the proof of
Theorem~\ref{thm:WeightCriterion}, together with the permutation
$(12)$ of the inner arguments, cover all pairs of generators.
The criterion also holds for $\Acaa$, since
$(x_1x_2)x_3=(x_2x_3)x_1$.
\end{example}

\begin{example}\label{exmp:NoCubicRelations}
Suppose $\Var$ has one commutative binary operation and no
identities of degree~3 beyond commutativity. Then
$\Var_q=\ComMag$, the free operad on one commutative binary
operation. We have $\dim\Var(3)=3$ and $\dim\Nov(3)=6$, whereas
$\dim(\Nov(2)\otimes\Var(2))=2$. Consequently,
\[
\dim(\Nov\medcirc\Var)(3)\le 12<18
=\dim(\Nov(3)\otimes\Var(3)).
\]
The weight criterion therefore fails. This applies in particular to the variety of Jordan algebras, whose additional defining identity has degree~4. The failure of the weight criterion in this case does not contradict speciality of all derived Jordan algebras; see \cite[Proposition~3.7]{KolSartSIGMA2024}.
\end{example}

The criterion of Theorem~\ref{thm:WeightCriterion} also admits a quantitative form. Let $\Var$ be an arbitrary binary
operad, and
\[
V=\Var(2),\qquad M=\mathcal F_V(3),\qquad
\pi:M\longrightarrow\Var(3),\qquad R=\ker\pi.
\]
Write
\[
M=U_1\oplus U_2\oplus U_3,\qquad
U_3=V\vecotimes V,\quad U_1=(13)U_3,\quad U_2=(23)U_3.
\]
Thus $U_s$ corresponds to the products in which $x_s$ is the outer
factor. Denote by $p_s:M\to U_s$ the corresponding projection,
and put $A_s=\pi(U_s)$. Hence,
\[
A_3=\operatorname{span}\{(x_1\circ_jx_2)\circ_i x_3\mid i,j\in I\}\subseteq \Var(3).
\]
We call
\[
\delta(\Var)=\dim\bigl(U_3/p_3(R)\bigr)
\]
the \emph{cubic defect} of $\Var$. Since $R$ is $S_3$-invariant,
the three spaces $U_s/p_s(R)$ have the same dimension. By
Theorem~\ref{thm:WeightCriterion}, the weight criterion holds if
and only if $\delta(\Var)=0$.

Let $N=\Nov(3)$. In the differential realization of the free
Novikov algebra, choose the basis
\[
e_s=x_s''x_jx_k,\qquad f_s=x_sx_j'x_k',
\qquad \{s,j,k\}=\{1,2,3\}.
\]
For $s=1,2,3$, set
\begin{equation}\label{eq:CubicNovTree}
N_s=\operatorname{span}\{f_j,f_k,f_s+e_j,f_s+e_k\},
\qquad \{s,j,k\}=\{1,2,3\}.
\end{equation}

\begin{lemma}\label{lem:CubicDerivedImage}
Under the identification of
Proposition~\ref{prop:WeightHadamard},
\begin{equation}\label{eq:CubicDerivedImage}
D\Var(3)=\sum_{s=1}^3N_s\otimes A_s
\subseteq N\otimes\Var(3).
\end{equation}
More precisely, $N_s\otimes A_s$ is the image of the tree summand
of $\mathcal F_{D\Var(2)}(3)$ with outer argument $x_s$.
Moreover, $N_s+N_t=N$ for $s\ne t$, and $f_s\notin N_s$.
\end{lemma}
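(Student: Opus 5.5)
The plan is to compute the image of each tree summand of $\mathcal F_{D\Var(2)}(3)$ directly in the differential realization given by Proposition~\ref{prop:WeightHadamard}. Throughout, the generators of $D\Var(2)=\Nov(2)\otimes\Var(2)$ are the operations $x\prec_i y=x\circ_i d(y)$ and $x\succ_i y=d(x)\circ_i y$.

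\emph{Step 1: the summand with outer argument $x_3$.} This summand is spanned by the four types $(x_1\ast_j x_2)\star_i x_3$ with $\ast,\star\in\{\prec,\succ\}$. Expanding by the Leibniz rule, we track the Novikov factor and the $\Var$ factor separately. For the Novikov factor I expect:
\begin{itemize}
\item $(x_1\prec x_2)\prec x_3\mapsto x_1x_2'x_3'=f_1$;
\item $(x_1\succ x_2)\prec x_3\mapsto f_2$;
\item $(x_1\prec x_2)\succ x_3\mapsto d(x_1x_2')x_3=f_3+e_2$;
\item $(x_1\succ x_2)\succ x_3\mapsto f_3+e_1$.
\end{itemize}
In each case the $\Var$ factor is $\pi(\mu_i\vecotimes\mu_j)$. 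This uses the fact, already recorded in the proof of Proposition~\ref{prop:WeightHadamard}, that the same Leibniz expansion governs both factors.

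The Novikov choice $(\ast,\star)$ and the $\Var$ choice $(i,j)$ vary independently. Hence the image is spanned by the elements $n\otimes a$ with $n$ in the span of these four Novikov elements and $a\in A_3$, which is exactly $N_3\otimes A_3$.

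\emph{Step 2: the other two summands.} The summands with outer argument $x_1$ or $x_2$ are obtained from the $x_3$ summand by the transpositions $(13)$ and $(23)$. The map $\mathcal F_{D\Var(2)}(3)\to N\otimes\Var(3)$ is $S_3$-equivariant and the action is diagonal, so these summands have images $N_1\otimes A_1$ and $N_2\otimes A_2$. One must check that the relabelling of $e_s,f_s$ matches \eqref{eq:CubicNovTree}, which it does by construction.

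Since $D\Var(3)$ is the image of all of $\mathcal F_{D\Var(2)}(3)$, and this space is the direct sum of the three tree summands as in \eqref{eq:ThreeSummands}, we obtain \eqref{eq:CubicDerivedImage}.

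\emph{Step 3: the two claims about the spaces $N_s$.} The six elements $e_s,f_s$ are distinct differential monomials of weight $-1$, so they form a basis of $N$ by Proposition~\ref{prop:WeightNov}.
\begin{itemize}
\item For $s\ne t$, say $s=3$ and $t=1$, the sum $N_3+N_1$ contains $f_1,f_2,f_3$. Subtracting these from $f_3+e_1$, $f_3+e_2$ and $f_1+e_3$ gives $e_1,e_2,e_3$. Hence $N_3+N_1=N$, and the remaining pairs follow by symmetry.
\item For $f_s\notin N_s$, suppose a combination of the four spanning vectors of $N_s$ equals $f_s$. The $e_j$- and $e_k$-coordinates force the coefficients of $f_s+e_j$ and $f_s+e_k$ to vanish. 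The remaining vectors $f_j,f_k$ cannot produce $f_s$.
\end{itemize}

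\emph{Main obstacle.} The only delicate point is Step 1: confirming the precise identification of composites, namely that $\Nov$ compositions pair with $\Var$ compositions tensorwise with no sign or symmetry twist. This must hold in particular when $(12)\mu_i=\pm\mu_j$, which identifies $\prec_i$ with $\pm\succ_j$. I would handle this by working with the basis $\nu_p\otimes\mu_k$ of $D\Var(2)$ rather than with the operations $\prec,\succ$. Then the redundancy causes no problem: it only reduces the number of distinct spanning elements, not their span.
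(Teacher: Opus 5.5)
Your proposal is correct and follows essentially the same argument as the paper. You compute the four Novikov composites for one tree summand to get the spanning vectors of $N_s$, note that the $\Var$-labels vary independently so the image is $N_s\otimes A_s$, and check $N_s+N_t=N$ and $f_s\notin N_s$ directly from the spanning sets; the only cosmetic difference is that you treat the summand with outer argument $x_3$ and transport by $S_3$-equivariance, whereas the paper writes the computation for a general inner pair $x_j,x_k$.
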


\begin{proof}
For a fixed inner pair $x_j,x_k$, the four compositions of binary
Novikov operations are
\[
(x_j\circ x_k)\circ x_s=f_j,\qquad
(x_k\circ x_j)\circ x_s=f_k,
\]
\[
x_s\circ(x_j\circ x_k)=f_s+e_k,\qquad
x_s\circ(x_k\circ x_j)=f_s+e_j.
\]
They form a basis of $N_s$. The labels from $V$ at the two vertices
are independent of the Novikov labels, so the image of the
corresponding tree summand is exactly $N_s\otimes A_s$.
Summing over the three inner pairs gives
\eqref{eq:CubicDerivedImage}.
The remaining assertions follow directly from
\eqref{eq:CubicNovTree}: the sum of any two of these spaces
contains all $f_i$, and then all $e_i$, whereas $f_s$ itself
cannot be a linear combination of the four displayed generators
of $N_s$.
\end{proof}

\begin{theorem}\label{thm:CubicDefect}
For every binary operad $\Var$,
\begin{equation}\label{eq:CubicDefectDimension}
\dim\bigl((\Nov(3)\otimes\Var(3))/D\Var(3)\bigr)
=6\delta(\Var).
\end{equation}
In particular,
\[
\dim D\Var(3)=6\bigl(\dim\Var(3)-\delta(\Var)\bigr).
\]
\end{theorem}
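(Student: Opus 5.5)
We compute the codimension of $D\Var(3)=\sum_s N_s\otimes A_s$ in $N\otimes\Var(3)$ (Lemma~\ref{lem:CubicDerivedImage}) by passing to annihilators. A functional on $N\otimes\Var(3)$ is an element $\Phi\in N^*\otimes\Var(3)^*\cong\Hom(N,\Var(3)^*)$. It kills $D\Var(3)$ if and only if $\Phi(N_s)\subseteq A_s^\perp$ for each $s=1,2,3$. So the codimension equals $\dim\mathcal L$, where
\[
\mathcal L=\{\varphi\in\Hom(N,\Var(3)^*)\mid \varphi(N_s)\subseteq A_s^\perp,\ s=1,2,3\}.
\]

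First we identify the subspaces involved. Since $A_s=\pi(U_s)$, we have $A_s^\perp=\{\lambda\in\Var(3)^*\mid \lambda\circ\pi|_{U_s}=0\}$. Next, $\Var(3)^*$ identifies with $R^\perp\subseteq M^*=U_1^*\oplus U_2^*\oplus U_3^*$, so $\lambda\in A_s^\perp$ means exactly that the $U_s^*$-component of $\lambda$ vanishes. The projection $R^\perp\to U_s^*$ has image $(p_s(R))^{\perp}$ inside $U_s^*$: indeed, a functional on $U_s$ extends by zero to an element of $R^\perp$ iff it kills $p_s(R)$. Its dimension is $\delta(\Var)$. We therefore get a short exact sequence
\[
0\to A_s^\perp\to\Var(3)^*\to B_s\to0,\qquad \dim B_s=\delta,
\]
where the last map is the $U_s^*$-component.

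In terms of the combined map $\Var(3)^*\to B_1\oplus B_2\oplus B_3$, the condition on $\varphi$ becomes: the $B_s$-component of $\varphi$ vanishes on $N_s$. The key step is to show that these three conditions impose exactly $\sum_s \dim N_s\cdot\delta=3\cdot4\cdot\delta$ independent linear conditions. Then $\dim\mathcal L=6\dim\Var(3)-12\delta$ and the codimension would be $12\delta$, not $6\delta$. So the conditions cannot be independent, and the count must be organised differently. I will compute $\dim\mathcal L$ directly, by working with a complement rather than counting conditions.

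The likely decisive fact is that the joint map $\Var(3)^*\to B_1\oplus B_2\oplus B_3$ is not surjective. Its image $Z=\operatorname{im}(R^\perp\to \bigoplus_s (p_s R)^\perp)$ satisfies $\Var(3)^*/(\bigcap_s A_s^\perp)\cong Z$, and $\bigcap_s A_s^\perp=0$ because $\sum_s A_s=\Var(3)$. Then
\[
\mathcal L=\{\varphi:N\to Z\text{ (lifted)}\mid \mathrm{pr}_s\varphi(N_s)=0\},
\]
with $\dim Z=\dim\Var(3)$. The plan is to decompose further using the vector space
\[
Z\supseteq Z_{st}=\ker\mathrm{pr}_s\cap\ker\mathrm{pr}_t=\text{the annihilator of }A_s+A_t.
\]
The main obstacle is to show that $Z_{st}$ has dimension $\delta$. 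This should be justified as follows: $U_r/p_r(R)$ has dimension $\delta$, and $A_s+A_t$ has codimension equal to $\dim U_r/p_r(R)$, because $\pi(U_r)$ modulo $\pi(U_s+U_t)$ is $U_r/p_r(R)$.

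By $S_3$-symmetry, $\dim Z_{st}=\delta$ for each pair. One then checks that $Z_{12}$, $Z_{13}$ and $Z_{23}$ are independent in $\Var(3)^*$; this holds because $Z_{st}$ projects isomorphically onto $B_r$. Choose a complement $Z=Z_{12}\oplus Z_{13}\oplus Z_{23}\oplus Z_0$. The conditions then split over the four summands:
\begin{itemize}
\item on $Z_{st}$ (outer argument $r$), $\varphi$ is constrained only by $N_r$, giving $(6-4)\delta=2\delta$ free dimensions per pair, $6\delta$ in total;
\item on $Z_0$, the maps $Z_0\to B_s$ are jointly injective, so $\varphi$ must vanish on $N_s+N_t=N$, by Lemma~\ref{lem:CubicDerivedImage}.
\end{itemize}
Making this last splitting rigorous, i.e.\ showing that the constraints really decouple along a suitable choice of $Z_0$, is where I expect most of the work. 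Granting it, $\dim\mathcal L=6\delta$, which is \eqref{eq:CubicDefectDimension}. The formula $\dim D\Var(3)=6(\dim\Var(3)-\delta)$ follows from $\dim N=6$.
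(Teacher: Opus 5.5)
There is a genuine gap, and it sits exactly at the step you postpone. Your setup is right: $\mathcal L$ is the annihilator of $D\Var(3)$, and $\dim Z_{st}=\dim U_r/p_r(R)=\delta$ is correct. But the decoupling cannot be carried out as you describe it.

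Write $K_s=A_s^\perp$. For an arbitrary complement $Z_0$ the constraints do not split. $K_s$ contains $Z_{st}\oplus Z_{su}$, but $K_s\cap Z_0$ may be nonzero. Take one commutative operation with $R$ spanned by $u_1+u_2+u_3$, where $u_s$ spans the one-dimensional $U_s$; this is the identity $(x_1x_2)x_3+(x_2x_3)x_1+(x_3x_1)x_2=0$. Then $\delta=0$, all $Z_{st}=0$, $Z_0=\Var(3)^*$ is two-dimensional, and each $K_s$ is a line. So the condition on $N_s$ alone does not force $\varphi|_{N_s}=0$.

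Your inference ``the maps $Z_0\to B_s$ are jointly injective, so $\varphi$ must vanish on $N_s+N_t=N$'' is a non sequitur. On $N_s$ you only know $\mathrm{pr}_s\varphi=0$, not $\mathrm{pr}_t\varphi=0$.

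Two auxiliary claims are also wrong:
\begin{itemize}
\item The image of $R^\perp\to U_s^*$ is $(R\cap U_s)^\perp$, of dimension $\dim A_s$, not $(p_s(R))^\perp$. Extension by zero gives only one inclusion, and this error is what produced your spurious count $12\delta$.
\item $Z_{st}$ maps isomorphically onto $(p_r(R))^\perp\subseteq B_r$, not onto $B_r$. The independence of $Z_{12},Z_{13},Z_{23}$ still holds, because $Z_{st}$ is supported on the $U_r^*$-component alone.
\end{itemize}

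The missing idea is to use the pairwise intersections $N_s\cap N_t$ instead of the sums $N_s+N_t$. If $n\in N_s\cap N_t$, then $\varphi(n)\in K_s\cap K_t=Z_{st}$. A direct check gives $N_1\cap N_2=\mathrm{span}\{f_3,\,f_1+f_2+e_3\}$ and its permutations, and together these span $N$. This fact is not contained in Lemma~\ref{lem:CubicDerivedImage}. Hence $\varphi(N)\subseteq Z_{12}\oplus Z_{13}\oplus Z_{23}$, and no $Z_0$ is needed.

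For $\{r,s,t\}=\{1,2,3\}$ you have $K_r\cap Z_{st}=K_1\cap K_2\cap K_3=(A_1+A_2+A_3)^\perp=0$. So $K_r\cap(Z_{12}\oplus Z_{13}\oplus Z_{23})=Z_{rs}\oplus Z_{rt}$. The condition $\varphi(N_r)\subseteq K_r$ then says exactly that the $Z_{st}$-component of $\varphi$ kills $N_r$. Therefore $\mathcal L\cong\bigoplus_r\Hom(N/N_r,Z_{st})$, of dimension $3\cdot2\cdot\delta=6\delta$.

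With this repair your argument is the dual of the paper's proof. The paper works directly in $H=\bigoplus_s\overline N_s\otimes U_s$ and computes the image of $N\otimes R$ as $\bigoplus_s\overline N_s\otimes p_s(R)$, using $f_s\equiv0$ and $e_s\equiv-\overline f_t$ in $\overline N_t$. Your $\mathcal L$ is precisely the dual of the resulting quotient $\bigoplus_s\overline N_s\otimes(U_s/p_s(R))$.
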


\begin{proof}
Put $\overline N_s=N/N_s$. By
Lemma~\ref{lem:CubicDerivedImage}, the quotient on the left of
\eqref{eq:CubicDefectDimension} is naturally isomorphic to
\[
\frac{N\otimes M}
{N\otimes R+\sum_{s=1}^3N_s\otimes U_s}.
\]
First factor out the second summand in the denominator. This gives
\[
H=\bigoplus_{s=1}^3\overline N_s\otimes U_s.
\]
Write $\overline e_s,\overline f_s$ for the images of $e_s,f_s$
in $\overline N_s$. They form a basis of this two-dimensional
space. For $t\ne s$, the image of $f_s$ in $\overline N_t$ is
zero, and the image of $e_s$ is $-\overline f_t$.
Consequently, for every $r\in R$, the images in $H$ of
$f_s\otimes r$ and $e_s\otimes r$ are, respectively,
\[
\overline f_s\otimes p_s(r),\qquad
\overline e_s\otimes p_s(r)
-\sum_{t\ne s}\overline f_t\otimes p_t(r).
\]
The first family contains every
$\overline f_s\otimes p_s(r)$. Subtracting such elements from the
second family also gives every $\overline e_s\otimes p_s(r)$.
Thus the image of $N\otimes R$ in $H$ is precisely
\[
\bigoplus_{s=1}^3\overline N_s\otimes p_s(R).
\]
It follows that
\begin{equation}\label{eq:CubicDefectDecomposition}
\frac{\Nov(3)\otimes\Var(3)}{D\Var(3)}
\cong
\bigoplus_{s=1}^3\overline N_s\otimes\bigl(U_s/p_s(R)\bigr).
\end{equation}
Each summand has dimension $2\delta(\Var)$, which proves the
statement.
\end{proof}

\begin{corollary}\label{cor:CubicWitness}
If the weight criterion fails for $\Var$, then there exist
$i,j\in I$ such that
\[
(x_1'\circ_jx_2')\circ_i x_3
\notin D\Var\<x_1,x_2,x_3\>.
\]
This polynomial has weight $-1$.
\end{corollary}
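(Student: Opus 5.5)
The idea is to use the decomposition \eqref{eq:CubicDefectDecomposition} from the proof of Theorem~\ref{thm:CubicDefect}. If the weight criterion fails, then $\delta(\Var)>0$ by Theorem~\ref{thm:WeightCriterion}. So $p_3(R)\neq U_3$, and there is a basis element $\mu_i\vecotimes\mu_j\in U_3$ whose class in $U_3/p_3(R)$ is nonzero. Indeed, the elements $\mu_i\vecotimes\mu_j$ span $U_3$, so if every class vanished the quotient would be zero.

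Next I identify the polynomial $(x_1'\circ_jx_2')\circ_i x_3$ with an element of $N\otimes\Var(3)$ via Proposition~\ref{prop:WeightHadamard}. Its Novikov factor is the differential monomial $x_1'x_2'x_3=f_3$, and its $\Var$ factor is $\pi(\mu_i\vecotimes\mu_j)$. So the element is the image of $f_3\otimes(\mu_i\vecotimes\mu_j)\in N\otimes M$. The weight is $0+0-1=-1$, so it lies in $W_\Var(3)$, as the statement claims.

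It remains to show that this element is nonzero in the quotient by $D\Var(3)$. By the proof of Theorem~\ref{thm:CubicDefect}, the quotient $(N\otimes M)/(N\otimes R+\sum_s N_s\otimes U_s)$ maps isomorphically onto $\bigoplus_s\overline N_s\otimes(U_s/p_s(R))$. I then track the image of $f_3\otimes u$ with $u=\mu_i\vecotimes\mu_j\in U_3$. Since $u$ lies in the summand $U_3$, its image in $H$ is $\overline f_3\otimes u$ in the $s=3$ component. After passing to the quotient by $p_3(R)$, this becomes $\overline f_3\otimes[u]$.

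This class is nonzero for two reasons. First, $\overline f_3\neq0$ in $\overline N_3$, because $f_3\notin N_3$ by Lemma~\ref{lem:CubicDerivedImage}. Second, $[u]\neq0$ by the choice of $u$. A tensor product of two nonzero vectors is nonzero, so the polynomial does not lie in $D\Var(3)$.

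Finally, $D\Var\<x_1,x_2,x_3\>$ is multigraded, so its multilinear component is exactly $D\Var(3)$. Hence the polynomial is not in $D\Var\<x_1,x_2,x_3\>$.

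The main point needing care is the identification step. I have to check that the isomorphism in \eqref{eq:CubicDefectDecomposition} really sends the class of $f_3\otimes u$ to $\overline f_3\otimes[u]$, with no correction from the other summands. This holds because $u\in U_3$ has zero projection onto $U_1$ and $U_2$. I also have to check the convention that $\mu_i\vecotimes\mu_j$ corresponds to $(x_1\circ_jx_2)\circ_ix_3$, which the paper fixes in Section~\ref{sec:Preliminaries}.
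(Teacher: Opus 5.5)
Your proposal is correct and follows the paper's proof essentially verbatim. You pick a basis element $u=\mu_i\vecotimes\mu_j\notin p_3(R)$, identify the polynomial with $f_3\otimes\pi(u)$, and observe that its image under \eqref{eq:CubicDefectDecomposition} is $\overline f_3\otimes(u+p_3(R))\ne0$; your extra checks (the weight count, $\overline f_3\ne0$ from $f_3\notin N_3$, and the passage from $D\Var(3)$ to the multilinear component of $D\Var\<x_1,x_2,x_3\>$) only make explicit what the paper leaves implicit.
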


\begin{proof}
Since $p_3(R)\ne U_3$, some basis element
$u=\mu_i\vecotimes\mu_j$ does not belong to $p_3(R)$.
Under Proposition~\ref{prop:WeightHadamard}, the displayed
polynomial corresponds to $f_3\otimes\pi(u)$. Its image under
\eqref{eq:CubicDefectDecomposition} is
$\overline f_3\otimes(u+p_3(R))$, which is nonzero. Hence the displayed polynomial does not belong to
$D\Var\<x_1,x_2,x_3\>$.
\end{proof}

\section{On the Dong property}\label{sec:Dong}

Suppose $A$ is a linear space equipped
with a family of bilinear operations
$\circ _i$, $i\in I$. As above, we assume
the set $I$ is finite and the set of
operations is symmetric, i.e.,
for every $i\in I$ there is $j\in I$ such that
$y\circ _i x = \pm x\circ_j y$
for all $x,y\in A$.

A formal distribution over $A$ is a formal power series of the form
\[
a(z) = \sum\limits_{n\in \mathbb Z} a(n)z^{-n-1},
\quad a(n)\in A.
\]
The space of all formal distributions over $A$
is denoted $A[[z,z^{-1}]]$.

Two formal distributions
$a(z),b(z)\in A[[z,z^{-1}]]$ form a local pair
if there exists $N\in \mathbb Z_+$
such that for all $i\in I$
\begin{equation}\label{eq:Locality}
(a(w)\circ_i b(z)) (w-z)^N =0
\end{equation}
as a series in the space
$A[[z,z^{-1}, w,w^{-1}]]$.

In terms of coefficients, \eqref{eq:Locality}
means that
\begin{equation}\label{eq:LocalityCoeff}
\sum\limits_{s\ge 0} (-1)^s\binom{N}{s} a(n-s)\circ_i b(m+s) =0
\end{equation}
for all $i\in I$ and for all $n,m\in \mathbb Z$.

The condition \eqref{eq:LocalityCoeff}
may be considered as a recurrence relation
for the sequence $a(p)\circ_i b(q)$,
$p+q=\mathrm{const}$.
Using this relation,
for every $p,q \in \mathbb Z$ and for every
$i\in I$
the product
$a(p)\circ_i b(q)\in A$ may be expressed as a linear
combination of
\[
a(0)\circ_i  b(p+q),\ a(1)\circ_i b(p+q-1),\
\dots, \
a(N-1)\circ_i b(p+q-N+1).
\]
The standard delta-function expansion (see \cite{Kac1}) is
\[
a(w)\circ_i b(z) = \sum\limits_{n=0}^{N-1}
 c_{i(n)}(z) \dfrac{1}{n!} \partial_z^n
 \delta(w-z),
\]
where $\delta(w-z) = \sum\limits_{k+l=-1}w^kz^l$
is the formal delta-function, $c_{i(n)}(z) \in A[[z,z^{-1}]]$
is given by the formula
\[
c_{i(n)}(z) = \Res\limits_{w=0} (a(w)\circ_i b(z))(w-z)^n,
\]
that is, in terms of coefficients,
\[
c_{i(n)}(m) = \sum\limits_{s=0}^n (-1)^s\binom{n}{s} a(n-s)\circ_i b(m+s) \in A,
\]
for every $m\in \mathbb Z$.
The series $c_{i(n)}(z)$ are called
conformal products of $a(z)$ and $b(z)$:
\[
c_{i(n)}(z) = (a\circ_{i(n)} b)(z),\quad i\in I,\ n\in \mathbb Z_+.
\]

We say that an algebra $A$ satisfies the Dong property if for every triple of pairwise local formal distributions
$a(z),b(z),c(z)\in A[[z,z^{-1}]]$
the series $(a\circ_{i(n)} b)(z)$ and $c(z)$
form a local pair for all $i\in I$, $n\in \mathbb Z_+$.
If $A$ satisfies the Dong property, then every subset of pairwise local distributions {\em generates} a conformal algebra of formal distributions in $A[[z,z^{-1}]]$; see \cite{Kac1}.

If every algebra in a variety $\Var $ satisfies the Dong property, then we say that $\Var $ is a Dong variety, and the operad governing $\Var$ is said to be a Dong operad.
For example, $\Var=\Lie $ (Lie algebras)
and $\Var =\As$ (associative algebras) are Dong varieties.

The operad $\Var$ need not be quadratic; for example, the Jordan operad has defining relations of degree~4. Nevertheless, whether the Dong property holds is determined by the identities of degrees~2 and~3: $\Var$ is a Dong operad if and only if its quadratic part
$\Var_{q}$ (defined by the quadratic relations of $\Var$) is a Dong operad.
We justify this passage to the quadratic part after the following criterion.

\begin{theorem}[\cite{KolSart_DongOperads}]\label{thm:DongCriterion}
A binary quadratic operad $\Var$
generated by operations $\mu_i$, $i\in I$,
is a Dong operad if and only if
the elements $\mu_i^\vee \vec\otimes \mu_j^\vee$,
$i,j\in I$, are linearly independent in the
Koszul dual operad $\Var^!$.
\end{theorem}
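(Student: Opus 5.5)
The plan is to reduce the Dong property to a statement about arity-3 coefficient identities and then dualize, along the lines of \cite{KolSart_DongOperads}. Since $\Var$ is binary quadratic, every $\Var$-algebra is a quotient of a free one. Dong's lemma asks for locality of $(a\circ_{i(n)}b)(z)$ with $c(z)$; by the usual argument from \cite{Kac1} this reduces to finding, for large $M$, an identity showing that
\[
\bigl((a(w_1)\circ_j b(w_2))\circ_i c(z)\bigr)(w_1-z)^M(w_2-z)^M
\]
vanishes modulo the locality relations.

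First step. Consider the universal situation: the free $\Var$-algebra generated by the coefficients $a(n),b(n),c(n)$ modulo pairwise locality of order $N$. This ``universal'' object is a quotient of a space built from $\Var(3)$ tensored with a space of Laurent polynomials in three variables. I would show that $\Var$ is Dong if and only if, in $\Var(3)$, every composition $(x_1\circ_j x_2)\circ_i x_3$ with fixed inner pair is \emph{not} forced to be rewritable only via compositions with the inner pair $\{x_1,x_3\}$ or $\{x_2,x_3\}$. The reason is that a multiplication by $(w_1-z)^M$ kills terms where $c$ is local with an inner partner. More precisely, Dong holds iff the span of the other two tree summands in $\Var(3)$ does not meet the image of $U_3$ in a way that produces obstructions. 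Dually, this says that the projection $p_3\colon R\to U_3$ is injective on a suitable part. Equivalently, $R\cap(U_1\oplus U_2)$ has the expected size, i.e.\ $U_3\cap R^\perp=0$ in the dual pairing.

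Second step: dualize. Under the sign-equivariant pairing $\mathcal F_{V^\vee}(3)\times\mathcal F_V(3)$, with the summands orthogonal, the condition $\ker(U_3^\vee\to\Var^!(3))=U_3^\vee\cap R^\perp=0$ is exactly the linear independence of $\mu_i^\vee\vecotimes\mu_j^\vee$ in $\Var^!(3)$. By orthogonality, $U_3^\vee\cap R^\perp$ is the annihilator of $p_3(R)$ in $U_3^\vee$. So the condition becomes $p_3(R)=U_3$.

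Main obstacle. The hard part is the first step: showing rigorously that the Dong property is equivalent to a purely cubic condition on $R$. This means proving both that cubic identities of the right shape yield locality with an explicit bound (via Leibniz-type expansion of $(w_1-z)^M=((w_1-w_2)+(w_2-z))^M$), and that failure produces a counterexample. For the counterexample, I would first try constructing a counterexample algebra from the free $\Var$-algebra over the ring of distributions $\delta$-supported on the diagonals. I expect this to be precisely the content of \cite{KolSart_DongOperads}, so in the paper one simply invokes that reference.
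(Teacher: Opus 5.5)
The paper does not prove this statement; it quotes it from \cite{KolSart_DongOperads}. Your Step~2 is correct. It is the same orthogonality argument the paper uses, in the opposite direction, in the proof of Theorem~\ref{thm:DongWeight}: since distinct tree summands are orthogonal, $U_3^\vee\cap R^\perp$ is the annihilator of $p_3(R)$. Hence independence of the $\mu_i^\vee\vecotimes\mu_j^\vee$ in $\Var^!(3)$ is equivalent to $p_3(R)=U_3$.

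The gap is Step~1, which carries all of the content and which you neither prove nor state consistently. The correct cubic condition is that $p_3|_R\colon R\to U_3$ is \emph{surjective}. That is, every $(x_1\circ_jx_2)\circ_ix_3$ \emph{is} equal in $\Var(3)$ to a combination of compositions with inner pair $\{x_1,x_3\}$ or $\{x_2,x_3\}$. Equivalently $A_3\subseteq A_1+A_2$, which is identity~\eqref{eq:ToInside}. Your phrases ``is not forced to be rewritable'' and ``$p_3$ is injective on a suitable part'' describe something else. Injectivity of $p_3|_R$ would force $\dim R\le\dim U_3$, which already fails for the Dong operad $\As$, where $\dim R=6>4=\dim U_3$. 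Your final formula $p_3(R)=U_3$ is right, but the route to it in Step~1 is not.

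Both directions of Step~1 are short, so you need not defer the ``main obstacle.''

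\emph{Sufficiency.} Substitute $a(w_1),b(w_2),c(z)$ into \eqref{eq:ToInside}. Each term on the right is annihilated by $(w_1-z)^N(w_2-z)^N$, by locality of $(a,c)$ or $(b,c)$. Your expansion of $(w_2-z)^r$ through $w_2-w_1$ and $w_1-z$, combined with locality of $(a,b)$ for the factor $(w_1-w_2)^n$, then gives locality of $a\circ_{j(n)}b$ with $c$.

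\emph{Necessity.} Let $F$ be the free $\Var$-algebra on the symbols $a(n),b(n),c(n)$, and $J$ the ideal generated by the degree-2 locality relations of order $N$ for the three pairs. Record $a(n)$, $b(m)$, $c(k)$ by the monomial $w_1^nw_2^mz^k$ in $L=\Bbbk[w_1^{\pm1},w_2^{\pm1},z^{\pm1}]$. Then the multidegree-$(1,1,1)$ component of $F$ is $\Var(3)\otimes L$, and that of $J$ is $A_3\otimes(w_1-w_2)^NL+A_2\otimes(w_1-z)^NL+A_1\otimes(w_2-z)^NL$.

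Suppose $\pi(\mu_i\vecotimes\mu_j)\notin A_1+A_2$. Choose $\varepsilon\in\Var(3)^*$ vanishing on $A_1+A_2$ with $\varepsilon(\pi(\mu_i\vecotimes\mu_j))=1$. Then $\varepsilon\otimes\mathrm{id}$ induces a map from this component of $F/J$ to $L/(w_1-w_2)^NL$. It sends the coefficients of $\bigl((a\circ_{j(N-1)}b)(w_2)\circ_i c(z)\bigr)(w_2-z)^M$ to the classes of $(w_1-w_2)^{N-1}w_2^{p}z^{q}(w_2-z)^M$, $p,q\in\mathbb Z$. These classes are nonzero for every $M$, because $w_1-w_2$ does not divide $(w_2-z)^M$ in $L$ (substitute $w_1=w_2$). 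So $F/J$ is a $\Var$-algebra with pairwise local $a,b,c$ violating the Dong property.

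No distributions supported on diagonals are needed. This construction matches the paper's description, after Theorem~\ref{thm:DongCriterion}, of the necessity argument in \cite{KolSart_DongOperads}.
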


As in Section~\ref{sec:Weight}, the chosen binary generators form a
linear basis. Here $\mu_i^\vee$ denotes the dual basis with the sign-twisted
$S_2$-action fixed in Section~\ref{sec:Preliminaries}.
The statement is the combination of the sufficient and necessary
conditions proved in \cite{KolSart_DongOperads}.

The criterion extends to an arbitrary binary operad via its quadratic part. If $\Var_q$ is Dong, then so is $\Var$,
since every $\Var$-algebra is a $\Var_q$-algebra. For the converse,
the necessity argument in \cite{KolSart_DongOperads} uses the free
algebra on the coefficient symbols of three distributions, modulo
homogeneous locality relations of degree~2. Failure of locality
of a conformal product is detected by elements of degree~3.
One may also factor out all components of degrees at least~4:
the same degree~3 elements remain nonzero. The resulting algebra
satisfies every identity of degree at least~4 and is therefore a
$\Var$-algebra whenever its identities of degrees~2 and~3 are
those of $\Var_q$. This proves that $\Var$ is Dong if and only
if $\Var_q$ is Dong.

We now relate the Dong property of $\Var$ to the weight criterion for $D\Var$.

\begin{theorem}\label{thm:DongWeight}
Let $\Var $ be a binary quadratic operad.
Then the following statements are equivalent:
\begin{itemize}
    \item [(i)] $\Var $ is a Dong operad;
    \item[(ii)] $\Nov\otimes \Var = \Nov\medcirc \Var $.
\end{itemize}
\end{theorem}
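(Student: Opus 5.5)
Both conditions will be reduced to statements about the single summand $U_3=V\vecotimes V$ of $\mathcal F_V(3)$, and then shown to be orthogonal-complement statements under the Koszul pairing. Here $V=\Var(2)$ and $R=\ker(\mathcal F_V(3)\to\Var(3))$, so that $\Var=\mathcal P(V,R)$ and $\Var^!=\mathcal P(V^\vee,R^\perp)$.

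By Theorem~\ref{thm:WeightCriterion}, together with the cubic defect reformulation following Corollary~\ref{cor:Speciality}, condition (ii) is equivalent to $\delta(\Var)=0$, that is, $p_3(R)=U_3$. Indeed, condition (ii) of Theorem~\ref{thm:WeightCriterion} says exactly that every $\mu_i\vecotimes\mu_j$ equals $p_3(r)$ for some $r\in R$. On the dual side, by Theorem~\ref{thm:DongCriterion}, condition (i) says that the elements $\mu_i^\vee\vecotimes\mu_j^\vee$ are linearly independent modulo $R^\perp$. These elements form a basis of $U_3^\vee=V^\vee\vecotimes V^\vee\subseteq\mathcal F_{V^\vee}(3)$. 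So (i) is equivalent to $R^\perp\cap U_3^\vee=0$.

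The key step is the linear-algebra identity
\[
\bigl(p_3(R)\bigr)^{\perp_{U_3}}=R^\perp\cap U_3^\vee,
\]
where the left-hand side is the annihilator of $p_3(R)$ inside $U_3^\vee$ under the restricted pairing. This restricted pairing is nondegenerate because it is the block of the standard pairing for $\sigma=\tau=1$: different summands in \eqref{eq:ThreeSummands} are orthogonal, and on $U_3^\vee\times U_3$ the pairing is $\delta_{ik}\delta_{jl}$. To verify the identity, take $\phi\in U_3^\vee$. Since $\phi$ is orthogonal to the summands $(13)U$ and $(23)U$, we have $\langle\phi,r\rangle=\langle\phi,p_3(r)\rangle$ for every $r\in\mathcal F_V(3)$. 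Hence $\phi\in R^\perp$ if and only if $\phi$ annihilates $p_3(R)$.

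With this identity, $p_3(R)=U_3$ holds if and only if its annihilator in $U_3^\vee$ vanishes, by nondegeneracy and finite-dimensionality. That is exactly $R^\perp\cap U_3^\vee=0$, which is condition (i). Chaining the three equivalences proves the theorem.

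The main obstacle is bookkeeping rather than ideas. First, the generators $\mu_i$ in Theorem~\ref{thm:DongCriterion} and in Section~\ref{sec:Weight} must be the same chosen basis of $V$. Second, the sign twist $V^\vee=V^*\otimes\operatorname{sgn}_2$ must not spoil the orthogonality of the summand $U_3^\vee$ to the other two summands; it does not, since the twist only affects the $S_2$-action within each summand. Third, the inner-argument summand on the dual side must be matched with $U_3$, since both correspond to the coset representative $\sigma=1$. I would state these three points explicitly and then carry out the annihilator computation above.
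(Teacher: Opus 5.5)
Your proposal is correct and follows the paper's proof essentially step for step. The paper also reads condition~(ii) of Theorem~\ref{thm:WeightCriterion} as $p_U(R)=U$, uses the orthogonality of the tree summands to identify $R^\perp\cap U^\vee$ with the annihilator of $p_U(R)$ in $U^\vee$, and concludes by finite-dimensionality together with Theorem~\ref{thm:DongCriterion}.
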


\begin{proof}
Write $\Var=\mathcal P(V,R)$ and set
\[
U=V\vecotimes V,\qquad
W=(13)U\oplus(23)U,
\qquad \mathcal F_V(3)=U\oplus W.
\]
Let $p_U:\mathcal F_V(3)\to U$ be the projection along $W$.
Condition~(ii) of Theorem~\ref{thm:WeightCriterion} says precisely
that
\begin{equation}\label{eq:ProjectionCriterion}
p_U(R)=U.
\end{equation}
Indeed, for $u\in U$ this condition means that $u-w\in R$ for
some $w\in W$, which is the required rewriting relation.

The pairing defining the Koszul dual is nondegenerate on each
tree summand, and distinct summands are orthogonal. Hence
$U^\vee=V^\vee\vecotimes V^\vee$ pairs only with $U$, and
\[
R^\perp\cap U^\vee
=\{\lambda\in U^\vee\mid
\langle\lambda,p_U(r)\rangle=0\text{ for every }r\in R\}.
\]
Since $U$ is finite-dimensional, \eqref{eq:ProjectionCriterion}
is equivalent to $R^\perp\cap U^\vee=0$. The latter means that
the canonical map
\[
U^\vee\longrightarrow
\mathcal F_{V^\vee}(3)/R^\perp=\Var^!(3)
\]
is injective. By Theorem~\ref{thm:DongCriterion}, this is equivalent
to~(i). Theorem~\ref{thm:WeightCriterion} identifies
\eqref{eq:ProjectionCriterion} with~(ii), completing both implications.
\end{proof}

\begin{corollary}\label{cor:AllCriteria}
For a binary operad $\Var$, the following conditions are equivalent:
\begin{itemize}
\item[(i)] the weight criterion holds for $D\Var\<X\>$ for every $X$;
\item[(ii)] $\Nov\medcirc\Var=\Nov\otimes\Var$;
\item[(iii)] $\Var_q$ governs a 2-variety;
\item[(iv)] $\Var_q$ is a Dong operad;
\item[(v)] $\Var$ is a Dong operad.
\end{itemize}
\end{corollary}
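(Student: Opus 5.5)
The plan is to assemble Corollary~\ref{cor:AllCriteria} from the results already established, using condition~(ii) of Theorem~\ref{thm:WeightCriterion} as the common hub. I will denote that cubic rewriting condition by $(\ast)$; it is the existence of identities \eqref{eq:ToInside}, equivalently $p_U(R_{\Var})=U$. First, (i)$\Leftrightarrow$(ii) is exactly Proposition~\ref{prop:WeightHadamard}. Next, (ii)$\Leftrightarrow(\ast)$ is Theorem~\ref{thm:WeightCriterion}, which is stated for an arbitrary binary operad $\Var$ with a symmetric set of basic operations. I will note that such a basis always exists: $\Var(2)$ splits into symmetric and skew-symmetric parts, so we may choose a basis consisting of eigenvectors of $(12)$. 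Then $(\ast)\Leftrightarrow$(iii) is Proposition~\ref{prop:TwoVariety}(b).

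For (iv), I intend to apply Theorem~\ref{thm:DongWeight} to the quadratic operad $\Var_q$. This gives (iv)$\Leftrightarrow$ $\Nov\medcirc\Var_q=\Nov\otimes\Var_q$. Applying Theorem~\ref{thm:WeightCriterion} to $\Var_q$, the latter is equivalent to $(\ast)$ for $\Var_q$. The key observation is that $(\ast)$ depends only on the kernel $R=\ker(\mathcal F_V(3)\to\Var(3))$. Since $\Var_q\to\Var$ is an isomorphism in arity~$3$, we have $R_{\Var_q}=R_{\Var}$. Hence $(\ast)$ for $\Var_q$ is the same as $(\ast)$ for $\Var$, and (iv)$\Leftrightarrow$(ii). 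One could alternatively read this off from the proof of Theorem~\ref{thm:DongWeight}, which only uses \eqref{eq:ProjectionCriterion} and the Koszul dual in arity~3.

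Finally, (iv)$\Leftrightarrow$(v) is the reduction to the quadratic part explained just before Theorem~\ref{thm:DongWeight}. In one direction, every $\Var$-algebra is a $\Var_q$-algebra, so (iv)$\Rightarrow$(v). For (v)$\Rightarrow$(iv), the counterexample algebra from the necessity argument of \cite{KolSart_DongOperads} can be truncated in degrees $\ge4$, which makes it a $\Var$-algebra.

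The only step needing real care is this passage (v)$\Rightarrow$(iv), since it relies on the truncation argument rather than a formal identity. Here I will simply invoke the paragraph preceding Theorem~\ref{thm:DongWeight}. The rest is bookkeeping: check that the hypotheses of each cited result (binary operad, symmetric finite basis, quadratic where required) are met by $\Var$ or $\Var_q$.
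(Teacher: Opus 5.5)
Your proposal is correct and follows essentially the same route as the paper. It chains Proposition~\ref{prop:WeightHadamard}, Theorem~\ref{thm:WeightCriterion}, Proposition~\ref{prop:TwoVariety} and Theorem~\ref{thm:DongWeight} (applied to $\Var_q$, using $R_{\Var_q}=R_{\Var}$), and cites the preceding truncation argument for (iv)$\Leftrightarrow$(v); your remark that a $(12)$-eigenbasis of $\Var(2)$ always exists in characteristic~0 also usefully justifies the symmetry hypothesis that the paper leaves implicit.
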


\begin{proof}
Use Proposition~\ref{prop:WeightHadamard},
Theorem~\ref{thm:WeightCriterion},
Proposition~\ref{prop:TwoVariety}, and
Theorem~\ref{thm:DongWeight}, together with the fact that the
condition in degree~3 is the same for $\Var$ and $\Var_q$.
The equivalence of (iv) and (v) was proved above.
\end{proof}

\subsection{Dong property of derived operads}\label{sec:DerivedDong}

We now characterize the binary operads $\Var$ for which $D\Var$ has the Dong property.
The notation $U_s,A_s,N_s$ from Section~\ref{sec:Weight}
allows us to describe exactly when the derived operad is Dong.

For a rooted binary tree $T$ with leaves labelled by
$1,\dots,n$, let
\[
\gamma_T=\gamma_T^{\Var }:V^{\otimes(n-1)}\longrightarrow\Var(n)
\]
be the map given by composition along $T$. An ordering of the
inputs at each vertex may be chosen arbitrarily; changing it is
absorbed by the $S_2$-action on $V$.

\begin{theorem}\label{thm:DerivedDong}
For a binary operad $\Var$, the following conditions are equivalent:
\begin{itemize}
\item[(i)] $D\Var$ is a Dong operad;
\item[(ii)] the map $U_3\to\Var(3)$ is surjective;
\item[(iii)] the map $\gamma_T$ is surjective for every $n\ge2$
and every rooted binary tree $T$ with $n$ labelled leaves;
\item[(iv)] $\Var\medcirc_{GK}\mathcal Q=\Var\otimes\mathcal Q$
for every binary operad $\mathcal Q$.
\end{itemize}
\end{theorem}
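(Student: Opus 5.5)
The plan is to run the cycle (i)$\Leftrightarrow$(ii), then (ii)$\Rightarrow$(iii)$\Rightarrow$(iv)$\Rightarrow$(ii).

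\emph{(i)$\Leftrightarrow$(ii).} Apply Corollary~\ref{cor:AllCriteria} to the binary operad $D\Var$. It says that $D\Var$ is Dong if and only if condition~(ii) of Theorem~\ref{thm:WeightCriterion} holds for $D\Var$. In the language of Lemma~\ref{lem:CubicDerivedImage}, that condition reads
\[
N_3\otimes A_3\subseteq N_1\otimes A_1+N_2\otimes A_2 .
\]
This is because the tree summand of $\mathcal F_{D\Var(2)}(3)$ with outer argument $x_s$ maps onto $N_s\otimes A_s$.

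If (ii) holds, then by $S_3$-symmetry $A_1=A_2=A_3=\Var(3)$. Since $N_1+N_2=N$, the right-hand side is all of $N\otimes\Var(3)$, and the inclusion is immediate.

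Conversely, assume $A_3\ne\Var(3)$. Binarity gives $A_1+A_2+A_3=\Var(3)$, and the $A_s$ are permuted by $S_3$. Hence not all $A_s$ coincide, and after relabelling (using the transposition fixing the outer index $3$, which swaps $A_1$ and $A_2$) we may assume $A_3\not\subseteq A_1$. Choose a functional $\varphi$ on $\Var(3)$ with $\varphi(A_1)=0$ and $\varphi(a)\ne0$ for some $a\in A_3$. Now take $x=f_3+e_1\in N_3$ and apply $\mathrm{id}\otimes\varphi$ to the assumed inclusion for $x\otimes a$. The result is $\varphi(a)\,x\in N_2$.

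It remains to see that $f_3+e_1\notin N_2$. We have
\[
N_2=\operatorname{span}\{f_1,f_3,f_2+e_1,f_2+e_3\},
\]
so $f_3+e_1\in N_2$ would force $e_1\in N_2$. That is impossible by linear independence of the basis $e_s,f_s$. I expect this converse, namely choosing a suitable $\varphi$ and the element $f_3+e_1$, to be the main point; the rest is bookkeeping.

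\emph{(ii)$\Rightarrow$(iii).} Argue by induction on $n$. By $S_3$-symmetry, (ii) says that each of $A_1,A_2,A_3$ equals $\Var(3)$, so every arity-3 element is a combination of compositions along any prescribed shape. Since $\Var(n)$ is spanned by compositions along all trees, it suffices to show that the image of $\gamma_{T'}$ lies in the image of $\gamma_T$ for any two trees $T,T'$. Any two trees are connected by local moves that change only the subtree of three branches at one vertex. At each move, (ii) rewrites that local arity-3 composition along the other shape, while the remaining vertices are left unchanged. Chaining these rewrites gives the claim.

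\emph{(iii)$\Rightarrow$(iv).} The Hadamard product $(\Var\otimes\mathcal Q)(n)$ is spanned by elements $a\otimes b$ with $b=\gamma_T^{\mathcal Q}(q_1\otimes\dots\otimes q_{n-1})$ for some tree $T$. By (iii), $a$ is a sum of terms $\gamma_T^{\Var}(v_1\otimes\dots\otimes v_{n-1})$ along the same tree $T$. Each resulting term $\gamma_T^{\Var}(v)\otimes\gamma_T^{\mathcal Q}(q)$ is the composition along $T$ of the generators $v_k\otimes q_k$, so it lies in $\Var\medcirc_{GK}\mathcal Q$.

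\emph{(iv)$\Rightarrow$(ii).} Take $\mathcal Q$ to be the free operad on one operation without symmetry. Then $\mathcal Q(3)$ splits into tree summands. For $a\in\Var(3)$ and $b$ a nonzero element of the $x_3$-outer summand of $\mathcal Q(3)$, the element $a\otimes b$ lies in $\Var\otimes\mathcal Q=\Var\medcirc_{GK}\mathcal Q$. Projecting onto the $x_3$-outer summand of $\mathcal Q(3)$, only generator compositions along the $x_3$-outer tree contribute. Hence $a\otimes b\in A_3\otimes\mathcal Q(3)$, so $a\in A_3$, and therefore $A_3=\Var(3)$.
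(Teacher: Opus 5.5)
Your proposal is correct and follows essentially the paper's route. You prove (i)$\Leftrightarrow$(ii) via Corollary~\ref{cor:AllCriteria} and the inclusion $N_3\otimes A_3\subseteq N_1\otimes A_1+N_2\otimes A_2$ from Lemma~\ref{lem:CubicDerivedImage}, (iii)$\Rightarrow$(iv) by composing along a common tree in both factors, and (iv)$\Rightarrow$(ii) by testing against a free operad. The differences are minor. You kill a summand with a functional on $\Var(3)$ and use $f_3+e_1\notin N_2$, where the paper uses a functional on $\Nov(3)$ and $f_t\notin N_t$. You test with the free magmatic operad rather than $\ComMag$. For (ii)$\Rightarrow$(iii) you give a direct argument by local tree rotations, which relies on the standard fact that such moves connect all labelled rooted binary trees; the paper instead cites Strohmayer's result on weakly associative operads.
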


Condition~(iii) is the treewise surjectivity condition appearing
in \cite[Proposition~15]{Vallette2008}. In condition~(iv), the
white product is explicitly the generated sub-operad. No assertion
about its quadraticity is included.

\begin{proof}
(i) $\Rightarrow$ (ii).
By Corollary~\ref{cor:AllCriteria} and Theorem~\ref{thm:WeightCriterion}, the image of any tree summand in $D\Var(3)$ is contained in the sum of the images of the other two summands. Lemma~\ref{lem:CubicDerivedImage} therefore gives
\[
N_s\otimes A_s
\subseteq N_t\otimes A_t+N_u\otimes A_u,
\qquad \{s,t,u\}=\{1,2,3\}.
\]
Since $f_t=x_tx_u'x_s'\in N_s$ does not belong to $N_t$, there exists a linear functional $\varepsilon\in N^*$ such that
$\langle \varepsilon , N_t\rangle =0$,
$\langle \varepsilon , f_t\rangle =1$.
For every $a\in A_s$, apply $\langle \varepsilon, \cdot\rangle \otimes\mathrm{id}_{\Var(3)}$ to
$f_t\otimes a \in N_s\otimes A_s $: as a result, we obtain $a\in A_u$. Hence $A_s\subseteq A_u$. The symmetry implies $A_1=A_2=A_3$, and recall that their sum is $\Var(3)$. Therefore,
$A_s=\Var(3)$ for every $s=1,2,3$, and (ii) follows.

(ii) $\Rightarrow$ (i).
The $S_3$-action gives $A_1=A_2=A_3=\Var(3)$. Since $N_s+N_t=N$ for distinct $s,t$, the images of any two tree summands already span $N\otimes\Var(3)$. In particular, each tree summand in $D\Var(3)$ can be rewritten through the other two. Apply Theorem~\ref{thm:WeightCriterion} to $D\Var$ and then Corollary~\ref{cor:AllCriteria}.

(ii) $\Rightarrow$ (iii).
Condition~(ii), together with its permutations,
says that (iii) holds for $n=3$, that is, $\Var$ is weakly associative in the sense of \cite{Strohmayer}. Hence, (iii) holds for every $n\ge 2$.

(iii) $\Rightarrow$ (iv).
This was proved in \cite{Vallette2008}. Indeed, the arity~$n$ component of $\Var\medcirc_{GK}\mathcal Q$ is the sum over all trees $T$ of the images $\operatorname{im}\gamma_T^{\Var}\otimes\operatorname{im}\gamma_T^{\mathcal Q}=\Var(n)\otimes\operatorname{im}\gamma_T^{\mathcal Q}$, and the images $\operatorname{im}\gamma_T^{\mathcal Q}$ span $\mathcal Q(n)$.

(iv) $\Rightarrow$ (ii).
Take $\mathcal Q=\ComMag$. Its component of arity~3 has a basis $q_1,q_2,q_3$, where $q_s$ is the monomial with outer variable $x_s$. The image of the binary-generated sub-operad in arity~3 is
\[
\bigoplus_{s=1}^3 A_s\otimes\Bbbk q_s.
\]
Equality with $\Var(3)\otimes\ComMag(3)$ implies
$A_s=\Var(3)$ for every $s=1,2,3$.
\end{proof}

\begin{corollary}\label{cor:DerivedIterations}
For every binary operad $\Var$,
\[
D^2\Var\text{ is a Dong operad}
\quad\Longleftrightarrow\quad \Var(3)=0.
\]
If $\Var(3)\ne0$, then $D^k\Var$ is not a Dong operad for any
$k\ge2$. If $\Var(3)=0$, then all $D^k\Var$, $k\ge0$, are Dong
operads. The same statements hold for the weight criterion.
\end{corollary}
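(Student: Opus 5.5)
The plan is to apply Theorem~\ref{thm:DerivedDong} to $D\Var$ in place of $\Var$. That theorem says $D^2\Var=D(D\Var)$ is Dong if and only if, for the operad $D\Var$, the compositions with outer argument $x_3$ span $D\Var(3)$. In the notation of Lemma~\ref{lem:CubicDerivedImage}, this is the tree summand $N_3\otimes A_3$, so the condition reads
\[
N_3\otimes A_3 = D\Var(3)=\sum_{s=1}^3 N_s\otimes A_s .
\]
Throughout, the ambient identification is $D\Var=\Nov\medcirc\Var$ from Proposition~\ref{prop:WeightHadamard}.

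\emph{Case $\Var(3)=0$.} All $A_s$ vanish, so $D\Var(3)=0$. By induction (the arity-3 component of $D\Var$ is a subspace of $\Nov(3)\otimes\Var(3)$), every $D^k\Var$ with $k\ge1$ has zero arity-3 component. For $k=0$ we have $\Var(3)=0$ itself. In each case condition~(ii) of Theorem~\ref{thm:DerivedDong} and condition~(ii) of Theorem~\ref{thm:WeightCriterion} hold trivially, since $U_3$ maps onto $0$ and the relation is vacuous. So every $D^k\Var$ is Dong and satisfies the weight criterion, by Corollary~\ref{cor:AllCriteria}.

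\emph{Case $\Var(3)\neq0$.} Suppose, for contradiction, that $N_3\otimes A_3\supseteq N_1\otimes A_1$.
\begin{itemize}
\item Since $\sum_s A_s=\Var(3)\ne0$, some $A_s$ is nonzero, and by the $S_3$-action all three $A_s$ are isomorphic, so $A_1\neq0$.
\item By Lemma~\ref{lem:CubicDerivedImage}, $f_1\notin N_1$ while $f_1\in N_3$. I need the reverse fact: an element of $N_1$ outside $N_3$, for instance $f_3\in N_1\setminus N_3$.
\item Take $a\in A_1$ nonzero and a functional $\varepsilon\in N^*$ vanishing on $N_3$ with $\varepsilon(f_3)=1$. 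Apply $\varepsilon\otimes\mathrm{id}$ to $f_3\otimes a$: the left side gives $a$, while the image of $N_3\otimes A_3$ is $0$. This forces $a=0$, a contradiction.
\end{itemize}
Hence $D^2\Var$ is not Dong.

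For $k\ge2$, I will show that $D^{k-1}\Var$ again has nonzero arity-3 component, and then apply the same argument with $\Var$ replaced by $D^{k-2}\Var$. The key point is $\dim D\mathcal P(3)=6(\dim\mathcal P(3)-\delta(\mathcal P))$ from Theorem~\ref{thm:CubicDefect}. The main obstacle is showing this is positive, i.e.\ $\delta(\mathcal P)<\dim\mathcal P(3)$ whenever $\mathcal P(3)\ne0$. Here $\delta(\mathcal P)=\dim U_3/p_3(R)$, and $p_3$ restricted to the relations is the relevant map. I would argue directly instead: $D\mathcal P(3)\supseteq N_3\otimes A_3$, and $A_3\neq0$ by the $S_3$ argument above, so $D\mathcal P(3)\ne0$. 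Induction then gives $D^m\Var(3)\neq0$ for all $m$, and $D^k\Var=D^2(D^{k-2}\Var)$ is not Dong.

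For the weight criterion, Corollary~\ref{cor:AllCriteria} applied to the operad $D^{k-1}\Var$ (resp.\ $D\Var$) says that the weight criterion for $D^{k-1}\Var$ is equivalent to $D^{k-1}\Var$ being Dong. So the same dichotomy transfers, with the index shifted appropriately as in the statement.
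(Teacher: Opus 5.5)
Your proof is correct. For $D^2\Var$ and for the case $\Var(3)=0$ it matches the paper. In the main case you show that $f_3\otimes a\in N_1\otimes A_1\subseteq D\Var(3)$ does not lie in $N_3\otimes A_3$, the image of the outer-$x_3$ tree summand. Hence condition~(ii) of Theorem~\ref{thm:DerivedDong} fails for $D\Var$.

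You depart from the paper only in the passage to $k\ge2$.

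\emph{The paper's route.} It uses the canonical epimorphism $D\mathcal P\to\mathcal P$ induced by $\Nov\to\Com$, together with the observation that the Dong property descends to quotient operads. This gives $D^k\Var\twoheadrightarrow D^2\Var$, so non-Dongness of $D^2\Var$ propagates upward.

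\emph{Your route.} You propagate non-vanishing in arity~$3$ instead. From $0\ne N_3\otimes A_3\subseteq D\mathcal P(3)$ whenever $\mathcal P(3)\ne0$, you get $D^m\Var(3)\ne0$ for all $m$. You then apply the $D^2$ result to $D^{k-2}\Var$.

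\emph{Comparison.} Your argument stays entirely inside Lemma~\ref{lem:CubicDerivedImage} and Theorem~\ref{thm:DerivedDong}, and it needs no separate justification that the Dong property passes to quotients. The paper's argument additionally records the general fact that once the Dong property fails for some iterate, it fails for all later ones.

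One wording point about the weight criterion: no index shift is needed. Corollary~\ref{cor:AllCriteria} applied to $\mathcal P=D^k\Var$ says the weight criterion works for $D^k\Var$ (i.e., describes $D^{k+1}\Var\langle X\rangle$) if and only if $D^k\Var$ is Dong. So all three statements transfer verbatim, as in the paper's example with $\Com$, $\Nov$, $D\Nov$.
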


\begin{proof}
Suppose $\Var(3)\ne0$. Then each of the spaces $A_s$ is nonzero due to $S_3$-symmetry.
Choose $0\ne a\in A_1$. By Lemma~\ref{lem:CubicDerivedImage},
\[
f_3\otimes a\in N_1\otimes A_1\subseteq D\Var(3).
\]
The image of the tree summand with outer argument $x_3$ in
$D\Var(3)$ is $N_3\otimes A_3$. It does not contain
$f_3\otimes a$, since $f_3\notin N_3$.
Thus condition~(ii) of Theorem~\ref{thm:DerivedDong} fails for
the operad $D\Var$. The theorem implies that $D^2\Var$ is not Dong.

There is a canonical epimorphism $D\Var\to\Var$. Indeed, the morphism $\Nov\to\Com$ sending the Novikov product to the commutative associative product induces a morphism
\[
\Nov\otimes\Var\longrightarrow\Com\otimes\Var\cong\Var.
\]
Its restriction to $D\Var$ is surjective on binary generators, and hence is an epimorphism. The Dong property passes to quotient operads. Applying this observation repeatedly shows that no $D^k\Var$ with $k\ge2$ can be Dong in the case considered. It also shows that, once the Dong property fails at any step, it cannot hold at a later step.

If $\Var(3)=0$, then $D^k\Var(3)=0$ for all $k\ge0$, since each derived component is a subspace of the corresponding Hadamard product. The condition of Theorem~\ref{thm:WeightCriterion} is then automatic. Finally, use Corollary~\ref{cor:AllCriteria} to pass between the Dong property and the weight criterion.
\end{proof}

\begin{example}
The bound in Corollary~\ref{cor:DerivedIterations} is attained for $\Var=\Com$. Both $\Com$ and $D\Com=\Nov$ are Dong operads, whereas $D^2\Com=D\Nov$ is not. Thus the weight criterion holds for $\Com$ and $D\Com=\Nov$, but fails for $D^2\Com=D\Nov$.
\end{example}

The following example shows that the two white products may differ even when one factor is $\Nov$. In particular, the weight criterion does not imply that the derived operad of a binary quadratic operad is quadratic.

\begin{proposition}\label{thm:DerivedAcaaNonquadratic}
For the operad $\Acaa$,
\[
D\Acaa = \Nov\medcirc_{GK}\Acaa=\Nov\otimes\Acaa
\]
is a Dong operad, but the canonical epimorphism
\[
\Nov\medcirc_R\Acaa\longrightarrow\Nov\medcirc_{GK}\Acaa
\]
is not injective in arity~4. Hence, $D\Acaa$ is not quadratic.
\end{proposition}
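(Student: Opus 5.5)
The first part of the statement follows from results already established. Since $(x_1x_2)x_3=(x_2x_3)x_1$ holds in $\Acaa$, condition~(ii) of Theorem~\ref{thm:WeightCriterion} is satisfied. Hence $\Nov\otimes\Acaa=\Nov\medcirc_{GK}\Acaa=D\Acaa$.

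For the Dong property of $D\Acaa$ I would use Theorem~\ref{thm:DerivedDong}(ii) and compute $\Acaa(3)$ directly. Anticommutativity and anti-associativity give
\[
(x_1x_2)x_3=-x_1(x_2x_3)=(x_2x_3)x_1 .
\]
So all three tree monomials coincide up to sign, and $\Acaa(3)$ is spanned by $(x_1x_2)x_3$. It is at most one-dimensional, and in any case it lies in $A_3=\pi(U_3)$. Therefore $U_3\to\Acaa(3)$ is surjective and $D\Acaa$ is Dong. (If the statement is only meant to assert that $\Acaa$ is Dong, this follows instead from Corollary~\ref{cor:AllCriteria}.)

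For non-injectivity, the plan is to show that $\Acaa(4)=0$, hence $(\Nov\otimes\Acaa)(4)=0$, while $(\Nov\medcirc_R\Acaa)(4)\neq 0$.

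To see that $\Acaa(4)=0$, I would rewrite a monomial such as $(x_1x_2)(x_3x_4)$ repeatedly with anti-associativity, alternating between the two bracketings. The cyclic identity above, applied with $x_3x_4$ as a single argument, relates it to $((x_3x_4)x_1)x_2$ and its variants. Following this chain should produce $(x_1x_2)(x_3x_4)=-(x_1x_2)(x_3x_4)$, and similarly for left-normed words. Since $\operatorname{char}\Bbbk=0$, all degree~4 monomials vanish. This is the classical nilpotency of $\Acaa$ and is a routine check.

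The main obstacle is the lower bound $(\Nov\medcirc_R\Acaa)(4)\ne0$. Here I would use the explicit generators of $R_\circ$ from Example~\ref{exmp:White_R_Nov_Acaa}. The first attempt would be a concrete model: a small graded algebra $B=B_1\oplus B_2\oplus B_3\oplus B_4$ with two operations $\mu_1,\mu_2$ satisfying $\mu_2(a,b)=\mu_1(b,a)$ and the two quadratic relations, in which some degree~4 product is nonzero. The natural choice is the free algebra on four generators modulo the degree-3 relations and all of degree $\ge5$. Equivalently, I would compute $\dim(\Nov\medcirc_R\Acaa)(4)$ by a Gröbner--Shirshov (rewriting) argument in arity~4. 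Orient the relations of $R_\circ$ so that each tree monomial with outer argument $x_3$ is rewritten toward the other two shapes, as in \eqref{eq:ToInside}. Then check the finitely many overlaps, i.e.\ the ambiguities in arity~4, to find a normal monomial of arity~4 that does not reduce to~$0$. Alternatively, one can compute the rank of the linear system given by $R_\circ$ inside the $120\cdot 2^3$-dimensional space $\mathcal F_{E\otimes F}(4)$ and show that it is not full.

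Once some arity-4 element of $\Nov\medcirc_R\Acaa$ survives, its image in $\Nov\medcirc_{GK}\Acaa\subseteq\Nov(4)\otimes\Acaa(4)=0$ is zero, so the canonical epimorphism \eqref{eq:WhiteEpi} is not injective. Finally, suppose $D\Acaa$ were quadratic. It coincides with $\Nov\medcirc_{GK}\Acaa$, and that operad has the same generators and the same arity-3 relations as $\Nov\medcirc_R\Acaa$. Being quadratic, it would then equal $\Nov\medcirc_R\Acaa$, which contradicts the non-injectivity just shown.
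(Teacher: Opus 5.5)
Your handling of the first part matches the paper: the weight criterion via $(x_1x_2)x_3=(x_2x_3)x_1$, the Dong property of $D\Acaa$ via Theorem~\ref{thm:DerivedDong}(ii), $\Acaa(4)=0$ (the pentagon of anti-associativity moves has odd length), and the final deduction that $D\Acaa$ is not quadratic.

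The gap is in the step you yourself call the main obstacle. You never establish $(\Nov\medcirc_R\Acaa)(4)\neq 0$. You list three strategies but carry out none of them, and this inequality is the entire content of the non-injectivity claim. Your \emph{natural choice} of model, the free algebra modulo the cubic relations and everything of degree $\ge 5$, is circular. Its degree~$4$ component is nonzero exactly when $\mathcal W(4)\neq 0$, which is what must be proved. The Gr\"obner--Shirshov plan is also only a plan. You do not say which monomial order realizes your orientation, and you do not resolve the arity-$4$ ambiguities, so no surviving normal form is identified. The rank computation is likewise not done. (Incidentally, $\mathcal F_{E\otimes F}(4)$ has dimension $15\cdot 2^3=120$, not $120\cdot 2^3$.)

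What is missing is an explicit witness, which the paper supplies. Take $E=\Bbbk a\oplus\Bbbk b$ with the alternating form $\omega(a,b)=1$, and the graded algebra $B=E\oplus(E\otimes E)\oplus\overline E\oplus\Bbbk c$. Its products are $x*y=x\otimes y$, $x*(y\otimes z)=\omega(x,y)\overline z$, $x*\overline y=\overline y*x=-\omega(x,y)c$ and $(x\otimes y)*(z\otimes t)=\omega(x,z)\omega(y,t)c$, with all other products zero. By the grading, only the degree patterns $(1,1,1)$, $(1,1,2)$, $(1,2,1)$, $(2,1,1)$ need checking in \eqref{eq:AcaaDerivedRight} and \eqref{eq:AcaaDerivedLeft}. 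Each check is a short cancellation that uses only that $\omega$ is alternating. Then $(a*a)*(b*b)=\omega(a,b)^2c=c\neq 0$, so $(x_1*x_2)*(x_3*x_4)\neq 0$ in $\mathcal W(4)$. Without such a construction or a completed computation, your argument for non-injectivity, and hence for non-quadraticity of $D\Acaa$, is incomplete.
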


\begin{proof}
Since $\Acaa$-algebras form a 2-variety, we have the equality $\Nov\medcirc_{GK}\Acaa=\Nov\otimes\Acaa$.
Note that $\Acaa(3)$ is one-dimensional and spanned by $(x_1x_2)x_3$, so $\Acaa$ also satisfies condition~(ii) of Theorem~\ref{thm:DerivedDong}; hence $D\Acaa$ is a Dong operad.

Moreover, it is easy to check that $\Acaa(4)=0$.
Therefore $D\Acaa(4)\subseteq\Nov(4)\otimes\Acaa(4)=0$, and it suffices to show that $\mathcal W(4)\ne0$ for the quadratic operad $\mathcal W=\Nov\medcirc_R\Acaa$.

As shown by Example~\ref{exmp:White_R_Nov_Acaa}, the quadratic operad $\Nov\medcirc_R\Acaa$ is generated by one binary operation $\mu_1: (x,y)\mapsto x*y$ without prescribed symmetry, and
 $\mu_2: (x,y)\mapsto y*x$ is its opposite.
The identities derived in  Example~\ref{exmp:White_R_Nov_Acaa}
for $\mathcal W = \Nov\medcirc_R \Acaa$
are
\begin{equation}\label{eq:AcaaDerivedRight}
(x*y)*z+(x*z)*y=0,
\end{equation}
\begin{equation}\label{eq:AcaaDerivedLeft}
(x*y)*z+x*(y*z)+(y*x)*z+y*(x*z)=0.
\end{equation}

To show that $\mathcal W(4)\ne0$, we construct an explicit example of a $\mathcal W$-algebra. Let $E=\Bbbk a\oplus\Bbbk b$ and let $\omega:E\times E\to\Bbbk$ be the alternating bilinear form with $\omega(a,b)=1$. Let $\overline E$ be a copy of $E$, with $x\mapsto\overline x$ the fixed linear isomorphism. Consider the nine-dimensional graded space
\[
B=E\oplus(E\otimes E)\oplus\overline E\oplus\Bbbk c,
\]
whose summands have degrees $1,2,3,4$, respectively. Define its multiplication, for $x,y,z,t\in E$, by
\begin{equation}\label{eq:AcaaWitnessProducts}
\begin{aligned}
x*y&=x\otimes y,\\
x*(y\otimes z)&=\omega(x,y)\overline z,\\
x*\overline y=\overline y*x&=-\omega(x,y)c,\\
(x\otimes y)*(z\otimes t)&=\omega(x,z)\omega(y,t)c,
\end{aligned}
\end{equation}
and set all other products of homogeneous summands equal to zero.

It is straightforward to check that the identities \eqref{eq:AcaaDerivedRight} and \eqref{eq:AcaaDerivedLeft} hold on $B$, so $B$ is a $\mathcal W$-algebra.

Finally, \eqref{eq:AcaaWitnessProducts} gives
\[
(a*a)*(b*b)=(a\otimes a)*(b\otimes b)
=\omega(a,b)^2c=c\ne0.
\]
Consequently, the multilinear operation $(x_1*x_2)*(x_3*x_4)$ is nonzero in $\mathcal W(4)$, although its image in $D\Acaa(4)=0$ is zero.
\end{proof}

\textbf{Acknowledgments.}
This research was funded by the Committee of Science of the Ministry of Science and Higher Education of the Republic of Kazakhstan (Grant No.~AP26199089) and the Program of Fundamental Research RAS (project
FWNP-2026-0017).

We are grateful to Vladimir Dotsenko and Vsevolod Gubarev for suggesting that we distinguish between the quadratic white Manin product and the white Manin product in the sense of Ginzburg and Kapranov.


\begin{thebibliography}{99}

\bibitem{Anderson1971}
T.~Anderson,
The Levitzki radical in varieties of algebras,
Math. Ann. \textbf{194} (1971), 27--34.

\bibitem{AndersonKleinfeld1976}
T.~Anderson, E.~Kleinfeld,
A classification of 2-varieties,
Canad. J. Math. \textbf{28} (2) (1976), 348--364.

\bibitem{BalNov1985}
A.~A.~Balinskii, S.~P.~Novikov,
Poisson brackets of hydrodynamic type, Frobenius algebras and Lie algebras,
Sov. Math. Dokl. \textbf{32} (1) (1985), 228--231.

\bibitem{BCZ2017}
L.~A.~Bokut, Y.~Chen, Z.~Zhang,
Gr\"obner--Shirshov bases method for Gelfand--Dorfman--Novikov algebras,
J. Algebra Appl. \textbf{16} (1) (2017), 1750001, 22~pp.
\href{https://doi.org/10.1142/S0219498817500013}{doi:10.1142/S0219498817500013}.

\bibitem{DzhLofwall2002}
A.~S.~Dzhumadil'daev, C.~L\"ofwall,
Trees, free right-symmetric algebras, free Novikov algebras and identities,
Homology Homotopy Appl. \textbf{4} (2) (2002), 165--190.

\bibitem{GD1979}
I.~M.~Gel'fand, I.~Ya.~Dorfman,
Hamiltonian operators and algebraic structures related to them,
Funct. Anal. Appl. \textbf{13} (4) (1979), 248--262.

\bibitem{GK1994}
V.~Ginzburg, M.~Kapranov,
Koszul duality for operads,
Duke Math. J. \textbf{76} (1) (1994), 203--272.
\href{https://arxiv.org/abs/0709.1228}{arXiv:0709.1228}.

\bibitem{Kac1}
V.~G.~Kac,
Vertex algebras for beginners, second edition,
University Lecture Series, vol.~10,
American Mathematical Society, Providence, RI, 1998.

\bibitem{KolSartSIGMA2024}
P.~Kolesnikov, F.~Mashurov, B.~Sartayev,
On pre-Novikov algebras and derived Zinbiel variety,
SIGMA \textbf{20} (2024), 017, 15~pp.
\href{https://doi.org/10.3842/SIGMA.2024.017}{doi:10.3842/SIGMA.2024.017}.

\bibitem{KolSart_DongOperads}
P.~S.~Kolesnikov, B.~K.~Sartayev,
On the Dong property for a binary quadratic operad,
J. Algebra \textbf{691} (2026), 428--452.
\href{https://doi.org/10.1016/j.jalgebra.2025.11.025}{doi:10.1016/j.jalgebra.2025.11.025}.
\href{https://arxiv.org/abs/2412.20021}{arXiv:2412.20021}.

\bibitem{KolSartOrazg2019}
P.~S.~Kolesnikov, B.~Sartayev, A.~Orazgaliev,
Gelfand--Dorfman algebras, derived identities, and the Manin product of operads,
J. Algebra \textbf{539} (2019), 260--284.
\href{https://arxiv.org/abs/1903.02238}{arXiv:1903.02238}.

\bibitem{LodayVallette2012}
J.-L.~Loday, B.~Vallette,
Algebraic operads,
Grundlehren der mathematischen Wissenschaften, vol.~346,
Springer, Heidelberg, 2012.
\href{https://doi.org/10.1007/978-3-642-30362-3}{doi:10.1007/978-3-642-30362-3}.

\bibitem{Osborn92}
J.~M.~Osborn,
Novikov algebras,
Nova J. Algebra Geom. \textbf{1} (1992), 1--14.

\bibitem{ZP2024}
A.~P.~Pozhidaev, V.~N.~Zhelyabin,
On simple and semisimple finite-dimensional Novikov algebras and their automorphisms,
J. Algebra \textbf{689} (2026), 1--26.

\bibitem{SK_Europ}
B.~Sartayev, P.~Kolesnikov,
Noncommutative Novikov algebras,
Eur. J. Math. \textbf{9} (2) (2023), 35, 18~pp.
\href{https://doi.org/10.1007/s40879-023-00632-1}{doi:10.1007/s40879-023-00632-1}.


\bibitem{Strohmayer}
H. Strohmayer, Operads of compatible structures and weighted partitions, J. Pure Appl. Algebra \textbf{212}, No. 11, 2522--2534 (2008).


\bibitem{Vallette2008}
B.~Vallette,
Manin products, Koszul duality, Loday algebras and Deligne conjecture,
J. Reine Angew. Math. \textbf{620} (2008), 105--164.
\href{https://arxiv.org/abs/math/0609002}{arXiv:math/0609002}.

\end{thebibliography}
\end{document}